\numberwithin{equation}{section}
\newtheorem{theorem}{Theorem}[section]
\newtheorem{lemma}[theorem]{Lemma}
\newtheorem{proposition}[theorem]{Proposition}
\newtheorem{corollary}[theorem]{Corollary}
\theoremstyle{definition}
\newtheorem{definition}[theorem]{Definition}
\newtheorem{example}[theorem]{Example}
\newtheorem{conjecture}[theorem]{Conjecture}
\theoremstyle{remark}
\newtheorem{remark}[theorem]{\bf{Remark}}
\newcommand{\R}{{\mathbb{R}}}
\newcommand{\C}{{\mathbb{C}}}
\newcommand{\Z}{{\mathbb{Z}}}
\newcommand{\Q}{{\mathbb{Q}}}
\newcommand{\N}{{\mathbb{N}}}
\newcommand{\<}{{\langle}}
\renewcommand{\>}{{\rangle}}
\newcommand{\Tr}{{\rm Trace}}
\newcommand{\cg}{{\mathfrak{g}}}
\newcommand{\CC}{{\mathcal{C}}}
\newcommand{\CL}{{\mathcal{L}}}
\renewcommand{\ker}{{\rm{ker}}}
\newcommand{\tens}{\otimes}
\newcommand{\id}{\rm id}
\newcommand{\extd}{{\rm d}}
\newcommand{\eps}{\epsilon}
\newcommand{\ev}{{\rm ev}}
\newcommand{\coev}{{\rm coev}}
\newcommand{\und}{\underline}
\newcommand{\CGe}{{\CC\subseteq G\setminus\{e\}}}
\begin{document}

\title[Lie theory of finite simple groups and Roth property]{Lie
  theory of finite simple groups and the Roth property}
\keywords{Finite group, Lie algebra, Riemannian geometry, conjugacy
  class, noncommutative geometry, quantum group}
\subjclass[2000]{Primary 81R50, 58B32, 20D05}

\author{J L\'opez Pe\~na}
\address{University College London\\
  Department of Mathematics, Gower Street, London WC1E 6BT, UK}
\email{jlp@math.ucl.ac.uk}

\author{S Majid}
\address{Queen Mary University of London\\
  School of Mathematical Sciences, Mile End Rd, London E1 4NS, UK}
\email{s.majid@qmul.ac.uk}

\author{K Rietsch} \address{Kings College London\\ Department of
  Mathematics, The Strand, London, UK}
\email{konstanze.rietsch@kcl.ac.uk} \thanks{The
  1st author was funded by an EU Marie-Curie fellowship
  PIEF-GA-2008-221519 and MCIM grant MTM2010-20940-C02-01, the 2nd
  author by a Leverhulme research fellowship and the 3rd author by an
  EPSRC research fellowship EP/D071305/1}


\begin{abstract}
  In noncommutative geometry a `Lie algebra' or bidirectional bicovariant
  differential calculus on a finite group is provided by a choice of an ad-stable generating
  subset $\CC$ stable under inversion. We study the associated Killing form.
  For the universal calculus associated to $\CC=G\setminus\{e\}$ we show that the magnitude
 of the Killing form $\mu=\sum_{a,b\in\CC}K^{-1}_{a,b}$ is defined for all finite groups (even when $K$ is
 not invertible) and that a finite group is Roth, meaning its conjugation representation contains every irreducible,  {\em iff} $\mu\ne \frac{1}{N-1}$ where $N$ is the number of conjugacy classes. We show further that the Killing form is invertible in the Roth case, and that the Killing form restricted to the $(N-1)$-dimensional subspace of invariant vectors is invertible {\em iff} the finite group is almost-Roth (meaning its conjugation representation has at most one missing irreducible). It is known \cite{HZ,Tiep} that most nonabelian finite simple groups are Roth and that
 all are almost Roth.  At the other extreme from the universal calculus we prove that the 
 the generating conjugacy class in the case of  the dihedral groups
  $D_{2n}$ with $n$ odd has invertible Killing form, and the same for the
  $2$-cycles conjugacy class in any $S_n$. We also compute some eigenvalues of the
  Killing form in the case of the $n$-cycles class in $S_n$. Finally, we verify invertibility of the Killing forms of all 
  real conjugacy classes in all nonabelian finite simple  groups to order
  $75,000$, by computer, and we conjecture this to extend to all nonabelian finite simple groups. 
\end{abstract}

\maketitle

\section{Introduction}

In this paper we demonstrate the existence of a useful `Lie theory' of
finite groups with a detailed study of the Killing form. We recall that historically
the theory of Hopf algebras has unified enveloping algebras of Lie algebras 
with group algebras. In the same way there seems to be a reasonable
generalisation of Lie algebras themselves, which comes out of quantum groups and their
noncommutative differential geometry and which can nevertheless be
specialised to finite groups. Here the `Lie problem' of finding a
finite-dimensional Lie algebra-type object associated to the
Drinfeld-Jimbo quantum groups $U_q(\cg)$ was solved in \cite{Ma:blie}
in the form of a `braided-Lie algebra', consisting of a coalgebra
$\CL$ in a braided category and a bracket operation $[\ ,\ ]:\CL\tens\CL\to\CL$ subject to
certain axioms. This will be recalled in the preliminary Section~2 where
we will cover the reduction to the finite group case. For a braided Lie algebra 
there is also a notion of braided Killing form $K:\CL\tens \CL\to \und 1$ defined as a
braided-trace of $[\ ,\ ](\id\tens [\ ,\ ])$. When the category in
which  $\CL$ lies is 
Abelian, one has a quadratic braided enveloping algebra
$U(\CL)$ which forms a bialgebra and which in some cases can quotient to a
Hopf algebra in the category. Within this framework, for $U_q(\cg)$ and at least for
generic $q$ we have a certain $\CL\subset U_q(\cg)$ and
$U(\CL)\twoheadrightarrow B_q(G)$, where the latter is a braided
version of the quantum group of which $U_q(\cg)$ is a localisation
(alternatively one can work over formal power-series). In this
context, where $\cg$ is semisimple, the braided-Killing form is
nondegenerate as an expression of the factorisability of the quantum
group cf. \cite{Ma:blie,Ma:rieq}.  Just as in Lie theory, the
braided-Lie algebras here arise \cite{GomMa} from bicovariant
differential structures on quantum groups \cite{Woronowicz} but the
usual theorem that a topological group has at most one differentiable
structure making it a Lie group does not apply and rather there is a
known classification theory for the differential structures and hence
of braided-Lie algebras, for each $U_q(\cg)$. Also the usual theorem that a discrete
topology admits only the 0--dimensional differential structure does
not apply and this means that we can specialise to finite groups. Many
differential constructions still work and in particular one has a
notion of noncommutative de~Rham cohomology for each choice of
calculus.

We will not need the full extent of this theory, being interested in
the case where the category is that of vector spaces over a field $k$
with the trivial `flip' braiding and trivial associator. The general
framework, however, provides a bridge
\begin{equation} \label{functor}
	\begin{array}{rcl} 
		& {\rm Quantum\ Groups} & \\
		\swarrow & & \searrow \\ 
		{\rm Lie\ Algebras} & & {\rm Finite\ Groups}
	\end{array}
\end{equation}
for the transfer of ideas from Lie theory to finite groups (taking
ideas backwards up the left arrow is a loosely defined process of
`quantisation' and we then specialise down by the right arrow).  

For
the specialisation of structure represented by the first arrow one can
look at braided Lie algebras of the form $\CL=k\oplus\cg$, a linear
map $[\ ,\ ]:\cg\tens\cg\to \cg$ and a specific form for the remaining
structure (see Section~2). The axioms of a braided-Lie algebra then
reduce to those of a Leibniz algebra on $\cg$, which includes the case
of an ordinary Lie algebra. In the Lie case we obtain 
$U(\CL)\twoheadrightarrow U(\cg)$ as a quadratic bialgebra extension
of the usual enveloping algebra. The braided-Killing form extends the
usual Killing form, and is nondegenerate if and only if the usual
Killing form is.  Other choices of $\CL$ can be found from the degree
filtration of $U(\cg)$. 

On the right hand side we can consider braided-Lie
algebras of the form $\CL=k\CC$ where $\CC$ is a set and we take the
diagonal coalgebra structure. Then the axioms of a regular braided-Lie
algebra reduce to a set map $[\ ,\ ]:\CC\times\CC\to \CC$ obeying the
axioms of a left-handed rack.  A quandle is a rack with a further
restriction (see Section~2.1) and arises naturally when $\CC$ is an ad-stable
generating subset $\CC\subseteq G\setminus\{e\}$ in a finite group $G$.
Here $e$ is the group identity. We
consider such a quandle $\CC$ as playing the role of a `Lie algebra' for $G$.  
In this setting we also have a quadratic bialgebra $U(\CL)\twoheadrightarrow kG$ and a Killing form
which looks like 
\begin{equation}\label{killing}
	K(a,b)=|Z(ab)\cap\CC|,\quad \forall a,b\in\CC,
\end{equation}
where $Z(g)$ is the centraliser of $g\in G$. Note that $K(a,b)$ is the 
trace of $ab$ in the conjugation representation of $G$ on
$\CL=k\CC$.  We also
associate some constants to the Killing form when it is sufficiently invertible,
the most important of which is
\[
\mu = \sum_{a,b\in \CC} K^{-1}_{a,b}
\]
the sum of all matrix entries of $K^{-1}$ in our basis $\CC$. This is the magnitude of 
the matrix $K$ in the sense of \cite{Lei} and can be defined even when $K$ is not invertible
as long as the vector with all entries 1 is in the image. 

In this paper we work over $\C$ and study the Killing form (\ref{killing}), particularly the following question 
motivated by the above transfer of ideas from Lie theory: just as a
Lie algebra over $\C$ is semisimple if and only if the Killing form is
nondegenerate, {\em is the Killing form for an ad-stable inverse
  stable generating subset $\CC$ (as `Lie algebra') nondegenerate when the group
  is simple or a product of simple groups?} This is a bit too much to
  ask in general, as we will see, so we make the following definition.

\begin{definition}\label{nondeg} 
  Let $G$ be a finite group. If the Killing form is nondegenerate
  \begin{enumerate}
  \item for every ad-stable inversion-stable generating subset
    $\CC\subseteq G\setminus\{e\}$, we say that $G$ is {\em strongly
      nondegenerate}
  \item for the universal calculus $\CC=G\setminus\{e\}$, we say that
    $G$ is {\em nondegenerate}
  \item for every nontrivial real generating conjguacy class, we say
    that $G$ is {\em class-nondegenerate}
  \end{enumerate}
\end{definition}

Clearly (1) is the most desirable in the absence of a particular
choice of $\CC$ as `Lie algebra'. (2) is not very classical (the
universal calculus is very far from the classical one on a Lie group,
and has the undesirable property of yielding a trivial de Rham
cohomology) but is the simplest to look at and we will achieve a more
or less complete analysis of this case. (3) is reasonable if we think
that a `Lie algebra' should be in some sense minimal. It is also a
proxy for (1) since any $\CC$ is a disjoint union of conjugacy classes
and one might expect that if (3) holds then (1) will tend to hold as
well. Although it is not part of the classical analogy we also say
that a group is {\em absolutely nondegenerate} if (1) holds for all
$\CC$ not only generating ones. There is no difference with (1) in the
simple case.

Our main results concern $G$ a finite group with what we call the {\em
  Roth property}  that every irrep is contained in the conjugation
representation. We prove (Theorem~\ref{roth}) that every Roth property
group is nondegenerate. Moreover, we show (Theorem~\ref{rothmu}) that the magnitude $\mu$ of the
Killing form for the universal calculus is defined for every finite group and completely
characterises the Roth property. Namely,  we show that a finite group is Roth iff 
 $\mu\ne \frac 1{N-1}$ where $N$ is the number of conjugacy classes in the group, and
 in this case we give a formula for $\mu$. 
 The background here is that Roth's conjecture
\cite{Roth} in the theory of finite groups asserted that the conjugation
representation of the group $G$ on the vector space $\C[G]$ contains 
every complex irreducible representation of $G/Z(G)$
at least once (where $Z(G)$ is the centre of $G$). Roth's conjecture turned
out to be false in general, but is known to be true for symmetric
groups \cite{Frumkin} and alternating groups \cite{Scharf}, and,
recently, for the sporadic simple groups \cite{HZ} using methods from
\cite{Pass}.  Indeed, for simple nonabelian groups the exceptions
 amount to some instances of one
classical family of Lie type over finite fields of particular order\cite{HZ} and in these cases the conjugation representation lacks 
exactly one irrep\cite{Tiep}. In this case
we do not necessarily have nondegeneracy but a weaker result applies that Killing form is nondegenerate when
restricted to the subspace of invariant vectors (Proposition~\ref{deg}).
We also show there that if a finite
group lacks two or more irreps in its conjugation representation
then it is not nondegenerate. Hence if a finite group is nondegenerate
and not Roth then it must indeed lack precisely one irrep in its conjugation
representation. The non-degeneracy property in our Killing form approach 
thus provides a new point
of view on the Roth property, and may even coincide with it for finite simple
groups. The two properties are not equivalent in general however, 
see below. Meanwhile, the magnitude of the Killing form provides a complete
characterisation of whether a finite group is Roth or not.

Let us write $\Z_k$ for the cyclic group $\Z/k \Z$.
We first show in Corollary~\ref{ZG} that any nondegenerate group of
order $|G|>2$ is centreless while the group $\Z_2$ is exceptional in
being nondegenerate and not centreless. Hence we have the following
picture
\[ {\rm Most\  Nonabelian\ Simple}\subset {\rm Roth\ property}\subset
{\rm Nondegenerate}\subset {\rm Centreless}\cup\{\Z_2\}
\]
where on the left we mean all simple nonabelian groups including sporadics
with the possible exceptions identified in \cite{HZ}.  All inclusions
here are strict. For instance $S_n$, $n\ge 3$ and $D_{2n}$, odd $n\ge
3$ are Roth but of course they are not simple. Meanwhile the group
\[
(((\Z_5\times \Z_5) \rtimes \Z_4) \rtimes \Z_2) \rtimes \Z_2
\]
of order 400 (labeled (400, 207) in the Small Groups Library
\cite{SG}) is centreless and nondegenerate but not Roth (indeed we find that it is
the smallest such example). The last inclusion is also strict as many
centreless groups are not nondegenerate. Of the 680 centreless groups
of order $|G|\le 500$ some 537 are nondegenerate. These results were
found using GAP \cite{GAP} and Sage \cite{Sage}.

The above results all pertain to the universal calculus and its
corresponding class of nondegenerate groups. 
At the other extreme we have the 
calculi associated to real, generating conjugacy classes, and the property of class non-degeneracy.
All 680 centreless groups of order $|G|\le 500$ are class-nondegenerate, although this includes
452 of them which do not actually have any real generating conjugacy classes.
In this context we make the specific conjecture:
\begin{conjecture}\label{con}
  All nonabelian finite simple  groups are class-nondegenerate.
\end{conjecture}
This is supported by computer analysis where we have checked this conjecture for
all $|G|\le 75,000$.  This was again done using Sage and the methods and
tables are collected in the Appendix. Thus
\[
\begin{array}{rcl}
{\rm Class\ Nondegenerate} & \supset {\rm Strongly\ Nondegenerate}\subset &{\rm Nondegenerate} \\
\\
\cup_{\rm conjecture}&\cup_{\rm many\ but\ not\ all} & \quad \cup_{\rm most\ but\ not\ all} \\
\\
& {\rm Finite\ Simple\  Nonabelian\ Groups,} &
\end{array}
\]
where Conjecture~\ref{con} is that all finite simple nonabelian
groups are included on the left and Theorem~\ref{roth} combined with
\cite{HZ} says that most  finite simple nonabelian groups
are included on the right. Up to order 75,000 the first non-Roth finite simple nonabelian group
is $PSU(3,3)$ according to \cite{Tiep} and one can check that this is {\em not} nondegenerate.  The only other
non-Roth finite simple nonabelian group to this order is $PSU(3,4)$ which at
order approximately $62,000$ is well beyond direct verification.  With possibly a small number of exceptions it would appear then
that most finite simple groups are nondegenerate at the two extremes
and indeed that many of them are strongly nondegenerate. Using Sage, we 
find that of the 15
simple nonabelian groups to order $|G|\le 8000$ in the tables in the
Appendix, the groups $A_5$, $A_6$, $A_7$, $PSL(2,8)$, $PSL(2,13)$,
$PSL(2,17)$, $PSL(3,3)$, $PSL(2,16)$, $PSL(2,25)$, and the sporadic Mathieu group $M_{11}$ {\em are}
strongly nondegenerate, whilst $PSL(2,7)$, $PSL(2,11)$, $PSL(2,19)$,
$PSU(3,3)$ and $PSL(2,23)$ are not strongly nondegenerate
(these are still class-nondegenerate and all but $PSU(3,3)$ are nondegenerate). This explains the above picture and somewhat
answers our original question to the extent currently within reach.

We will also prove in Corollary~\ref{Sn2cy} that the Killing form for
$S_n$ ($n\ge 3$) in the case where $\CC$ is taken to be the set of 2-cycles is
nondegenerate. This, combined with the Roth property and other data,
suggests that $S_n$ for $n\ge 5$ is at least class-nondegenerate. 
We have verified that  in fact it is absolutely non-degenerate up to and including $S_8$. 
Likewise, $D_{2n}$ for $n$ odd is
nondegenerate, class-nondegenerate (see Proposition~\ref{D2n}) and
is possibly strongly nondegenerate, but it not  absolutely
nondegenerate. We have checked strong nondegeneracy for odd $n$ dihedral 
groups $D_{2n}$ up to order $50$. Hence there are plenty of 
groups which are nonsimple but strongly
nondegenerate. We also mention $S_3,S_4,A_4$ as some other examples of
groups which are nondegenerate, strongly nondegenerate but not
absolutely nondegenerate.

Finally, the product of two Roth groups is clearly Roth, and we see
(Proposition~\ref{produni}) in our Killing form approach that this
characterises Roth groups among all nondegenerate ones. 
At the other extreme, suppose two groups $G_1$ and $G_2$ have  
`Lie algebras' with non-degenerate Killing form coming from  
conjugacy classes $\CC_1,\CC_2$. Then in Proposition~\ref{Kdis} 
we characterize, when the direct product $G_2\times G_2$
has nondegenerate Killing form on its `Lie algebra' obtained by forming the disjoint union 
$\CC_1\sqcup\CC_2$ in $G_1\times G_2$. 


Among further results we show that when nondegenerate, the matrix $K$ is
positive definite precisely when $\CC$ has consists only of elements
of order $2$. More generally the index in the sense of positive minus
negative eigenvalues is the number of elements of order $2$, see
Proposition~\ref{sig}. By the Feit-Thompson theorem every finite
simple group has at least one element of order $2$. The 
corresponding conjugacy class therefore gives us a choice of $\CC$ for which
the braided-Killing form is positive definite if it is
nondegenerate. This is a little reminiscent of usual Lie theory where
a complex simple Lie algebra has a compact real form where the Killing
form is negative definite.

The Killing form appears to have further properties
that are suggested by our data but which are not understood precisely
enough to formulate as a conjecture. The most important such observation
is that the Killing form decomposition of $\C\CC$ into eigenspaces tends to
be a decomposition into irreducibles or conjugate pairs of them.  We illustrate
this for $S_n$ and the 2-cycles class in Section~6. Finally we  compute 
some eigenvalues of the Killing form on the 
$n$-cycles class using a formula of
Zagier's and a related conjecture. 

Although the general picture makes sense over any field, all sections
after Section~2 will be over $\C$ (or a suitable splitting field for
the relevant groups).

\section{From braided-Lie algebras to `Lie algebras' on finite groups}

In this section we make precise (\ref{functor}) and thereby provide
the context of braided-Lie algebras which underpins the point of view
in the rest of the paper. We derive our point of view of an ad-stable
generating subset $\CC\subseteq G\setminus\{e\}$ as a `Lie algebra'
and the Killing form in Example~\ref{quand} which is then used in the
rest of the paper.

We recall that a braided category means a monoidal category $\tens$
with unit object $\und 1$ and natural isomorphisms
$\Psi:\tens\to\tens^{op}$, $\Phi: \tens(\ \tens\ )\to (\ \tens\
)\tens$ subject to standard triangle, hexagon and pentagon
identities. The associator $\Phi$ can be omitted since by Mac Lane's
theorem it can be inserted as needed for brackets to make sense, while
the braiding $\Psi$ is denoted by a crossing in a diagrammatic
notation in which these and other morphisms are read flowing down the
page and $\tens$ is denoted by juxtaposition \cite{Ma:prim}. Algebra
in such a category is done as `flow charts' except that under and over
crossings are significant.

A braided-Lie algebra \cite{Ma:blie} is a quadruple
$(\CL,\Delta,\eps,[\ ,\ ])$ where $\CL$ is an object on a braided
category, $\Delta:\CL\to \CL\tens\CL$ and $\eps:\CL\to \und 1$ makes
it a coalgebra (the axioms are those of a unital algebra but with
arrows reversed), and $[\ ,\ ]:\CL\tens\CL\to \CL$ is a braided
coalgebra map obeying the axioms 
\[ [\ ,\ ]([\ .\ ]\tens [\ ,\ ])(\id\tens\Psi\tens\id)(\Delta\tens\id\tens\id)=[\ ,\ ](\id\tens[\ ,\ ])\]
\[ (\id\tens[\ ,\ ])(\Psi\tens\id)(\id\tens\Psi^2)(\Delta\tens\id)=(\id\tens[\ ,\ ])(\Delta\tens\id).\]
Here a braided
coalgebra map means we are considering the braided tensor product
coalgebra structure on $\CL\tens\CL$. Since we are only interested in
tensor products and sums of one object one can also think of a single
braided-Lie algebra as a sextuple $(\CL,\Delta,\eps,[\ ,\
],\Psi,\Phi)$ where $\Psi:\CL\tens\CL\to \CL\tens \CL$ and
$\Phi:\CL\tens(\CL\tens\CL)\to (\CL\tens\CL)\tens\CL$ and subject to
similar axioms.
 
\begin{lemma}\cite{Ma:sol} 
  For any braided-Lie algebra $\CL$ the morphism
  $\tilde\Psi:\CL\tens\CL\to \CL\tens\CL$ defined by
  \[
 \tilde\Psi=([\ ,\ ]\tens\id)(\id\tens\Psi)(\Delta\tens\id)
  \]
  obeys the braid relations on $\CL^{\tens 3}$. We call the braided
  Lie algebra {\em regular} if $\tilde \Psi$ is invertible.
\end{lemma}

Associated to any braided Lie algebra $\CL$ in an abelian braided
category there is a quadratic bialgebra $U(\CL)$ in the braided category.
It is defined as the tensor algebra $T\CL$ modulo the relations given by
coequalizing the multiplication maps $\mu$ and $\mu \circ \tilde{\Psi}$,
and with coalgebra structure defined by extending that of $\CL$.

Associated to any braided-Lie algebra with (say) a left dual in the
braided category (a rigid object), there is a notion of
`braided-Killing form' $K:\CL\tens\CL\to \und 1$ defined as
\[ K=\ev\Psi( [\ ,\ ]\tens\id)(\id\tens[\ ,\ ]\tens\id) (\id\tens\id\tens\coev)\]
where $\ev:\CL^*\tens\CL\to \und 1$ and $\coev:\und 1\to \CL\tens\CL^*$ are the
evaluation and coevaluation and here $\Psi:\CL\tens\CL^*\to \CL^*\tens\CL$. This has the
form of a braided trace of $[\ ,[\ ,\ ]]$. Key properties including
invariance under the action of $[\ ,\ ]$ and braided-symmetry
$K=K\tilde\Psi$ are shown in \cite{Ma:blie}. The Killing form $K$ is
invertible in a standard categorical sense if there is another
morphism $K^{-1}:\und 1\to \CL\tens\CL$ such that
\[
(\id\tens K)(K^{-1}\tens\id)=\id = (K\tens\id)(\id\tens K^{-1})
\]
where these are morphisms $\CL\otimes\CL\otimes \CL\to\CL \otimes\CL\otimes \CL$.

\begin{proposition}\cite{Ma:blie}\label{quand2} 
  If $K$ is invertible then
  \[
  [\ ,\ ](\id\tens\sigma)\Delta=\id
  \]
  where $\sigma:\CL\to \CL$ is $\sigma = (\id\tens K) (\Psi\tens\id)
  (\id\tens K^{-1})$
\end{proposition}

\begin{example}
  An actual Lie algebra $\cg$ can be seen as a braided-Lie algebra of
  the form $\CL=k\oplus \cg$ in the category $\rm Vec$ of vector
  spaces over $k$ (so with trivial braiding of the underlying
  category, although a nontrivial $\tilde\Psi$ even in this case,
  provided that the Lie bracket is nonzero). Here
  \[
  [c,v] = v,\quad [v,c] = 0,\quad [c,c] = c,\quad \Delta v = v\tens
  c+c\tens v,\quad \Delta c = c\tens c,\quad\forall v\in\cg
  \]
  where $c$ spans the copy of $k$. The axioms of a braided-Lie algebra
  then amount to the bracket $[\ ,\ ]:\cg\tens\cg\to \cg$ obeying
  \[
  [[v,w],z]+[w,[v,z]]=[v,[w,z]],\quad\forall v,w,z\in\cg
  \]
  while regularity is automatic as is the property $[\ ,\
  ]\Delta=\id$. We do not require antisymmetry of the bracket which
  means that a braided-Lie algebra of this form is the same as saying
  that $\cg$ is a Leibniz algebra, a slightly more general notion than
  that of a Lie algebra but including it. Here $U(k\oplus\cg)$ is a
  quadratic bialgebra associated to any Leibniz algebra with relations
  $xy-yx=c[x,y]$ and $c$ central. In the Lie algebra case there is a
  bialgebra homomorphism $U(k\oplus\cg)\to U(\cg)$ sending $c=1$. The
  Killing form restricts to the usual Killing form and in addition
  \[
  K(c,c)=1,\quad K(c,x)=K(x,c)=0.
  \]
\end{example}

\begin{example}\label{quand}
  Similarly, we can consider $\CL=k\CC$ where $\CC$ is a set, and
  $\Delta a=a\tens a$ and $\eps(a)=1$ for all $a\in \CC$. Writing
  $[a,b]={}^ab$ as a notation, the axioms boil down in this case to
  \[
  {}^{({}^ab)}({}^a c)= {}^a({}^bc),\quad\forall a,b,c\in \CC.
  \]
  The regularity condition amounts to the requirement that for every
  $a,c$ there is a unique $b$ such that ${}^ab=c$.  Such a structure
  is variously called a `rack'. A `quandle' as opposed to a rack has
  the further condition ${}^aa=a$ and this is expressed in braided-Lie
  algebra terms as the further condition $ [\ ,\ ]\Delta=\id$. We
  assume henceforth that $\CC$ is finite. Then the Killing form on
  basis elements is clearly
  \[ K(a,b)=\Tr_\CL{}^a({}^b(\ ))=|\{c\in \CC\ |\
  {}^a({}^bc)=c\}|,\quad a,b\in\CC\] (the number of fixed points in
  $\CC$ under the iterated action shown). Proposition~\ref{quand}
  tells us (here $\sigma=\id)$ that if a rack has invertible Killing
  form then it is necessarily a quandle. The quadratic bialgebra
  $U(k\CC)$ is generated by $a\in \CC$ with relations $({}^ab)a=ab$
  for all basis elements $a,b\in \CC$.  If $\CC\subseteq
  G\setminus\{e\}$ is an ad-stable subset of a group $G$ it is
  well-known that it forms quandle (this point of view apparently goes back to
  Conway and Wraith), with ${}^ab=aba^{-1}$. In this case there is a
  bialgebra map $U(\CL)\to kG$ sending a basis element of $\CL$ to the
  same element viewed in $G$ (if $\CC$ generates then this is a
  surjection). Also in this case
  \begin{equation}\label{e:KViaChar}
  K(a,b) = |Z(ab)\cap\CC|=\chi_\CL(ab),\quad\forall a,b\in\CC
  \end{equation}
  where $\chi_\CL$ is the character of the conjugation representation
  of $G$ on $\CL$ and $Z(g)$ denotes the centralizer of $g\in
  G$. Clearly $K$ is ad-invariant since $\CC$ is and not only
  $\tilde\Psi$-symmetric but actually symmetric since
  $\chi_\CL(ba)=\chi_\CL(a(ba)a^{-1})=\chi_\CL(ab)$ for all
  $a,b\in\CC$. It is an interesting question if, starting
  with a `Lie algebra' $\CC\subseteq G\setminus\{e\}$ where $\CC$
  generates, we can recover the group $G$. The answer is in general that
  one has a covering group $G_\CC\twoheadrightarrow G$ \cite{Ma:cov}.
\end{example}

We have also made reference in the introduction to the use of `differential
calculus' on quantum groups as one method of construction of
braided-Lie algebras. We will not need this explicitly so suffice it
to say that a differential structure on a unital algebra $A$ means an
$A-A$-bimodule $\Omega^1$ of `differential 1-forms' equipped with a
map $\extd:A\to\Omega^1$ obeying the Leibniz rule. We also require
that the map $A\tens A\to\Omega^1$ sending $a\tens b\mapsto a\, \extd b$
is surjective and, optionally (one says that the calculus is
connected) that $\ker(\extd)$ is spanned by $1$. When $A$ is a Hopf
algebra or `quantum group' we can require the calculus to be covariant
under left or right translation, or both. In the latter case one says
that the calculus is bicovariant and these are classified by Ad-stable
right ideals $I$ in the augmentation ideal $A^+$
(cf. \cite{Woronowicz}). The left-invariant 1-forms $\Lambda^1$ can be
identified with $A^+/I$ and $\Omega^1$ is a free $A$-module over
$\Lambda^1$. The classical situation is where $A=\C[G]$, for
$G$ an algebraic group, $A^+=\mathfrak m_e$, the functions
vanishing at the identity, and $I=\mathfrak m_e^2$. 
Every unital algebra has a universal differential
calculus defined as $\Omega^1=\ker(\cdot:A\tens A\to A)$ and $\extd
a=1\tens a-a\tens 1$ which in the Hopf algebra case is bicovariant (it
corresponds to $I=0$). A calculus is called `inner' if there
is $\theta\in\Omega^1$ such that $[\theta,a]=\extd a$ for all $a\in
A$. This is a nonclassical concept.

\begin{theorem}\cite{GomMa} 
  Let $A$ be a coquasitriangular Hopf algebra and $\Omega^1$ an inner
  bicovariant differential calculus. Then there is a braided-Lie
  algebra $\CL$ associated to $\Lambda^1$ which lives in the braided
  category of right $A$-comodules.
\end{theorem}

In the case of the algebra of functions on a finite group the
bicovariant calculi are classified by ad-stable subsets $\CC\subseteq
G\setminus\{e\}$ with equality in the case of the universal calculus. The
calculus is inner with
\begin{equation}\label{theta}
	\theta=\sum_{a\in \CC}\omega_a
\end{equation}
where $\omega_a$ are the image in $A^+/I$ of the Kronecker
$\delta$-function at $a$. It is connected if $\CC$ generates. In
general, calculi on finite sets are classified by digraph structures
on the given set as the vertices, see \cite{Ma:gra} for some recent
work. The calculus is connected in the sense above if and only if the
underlying graph is connected. In this context $\CC$ stable under
inversion corresponds to the digraph being bidirected, i.e. with every
edge having arrows in both directions.  The graphs here are Cayley
graphs and are connected if and only if $\CC$ generates. From this
point of view:

\begin{lemma} 
  A finite group $G$ is simple {\em if and only if} all its nonzero 
  bicovariant calculi are connected.
\end{lemma}
\begin{proof} Suppose that $G$ is simple and $\CC$ a nonempty ad-stable subset
  (defining a nonzero bicovariant calculus). Let $N=\<\CC\>$ the subgroup
  generated by $\CC$. This is clearly normal and contains more than
  $e$ (as $\CC$ is nonempty), hence $N=G$ and the calculus is
  connected. Conversely, suppose that all nonempty ad-stable subsets
  $\CC$ generate $G$. Let $N\subseteq G$ be normal and
  $\CC=N\setminus\{e\}$. This is an ad-stable subset and $\<\CC\>=N$
  as $N\ne\{e\}$ is a normal subgroup, hence $N=G$. \end{proof}

Clearly one can further develop the differential geometry of `finite
Lie groups' and notably $S_3$ has in some sense constant curvature
while $A_4$ is Ricci flat for noncommutative differential structures
provided by suitable conjugacy classes and metrics
\cite{Ma:rieq,NML}. As far as we know no simple groups have yet been
studied at this level of noncommutative Riemannian geometry.

\section{Nondegeneracy of the Killing form for an ad-stable subset}

In this section we will look at the question of non-degeneracy of the
Killing form in maximum generality
and with miscellaneous results and examples. Then Section~4 will cover
the case of the universal calculus and Section~5 the case of conjugacy
classes. The analyses of these two extremal cases contain the main results of the paper.  

Let
$G$ be a finite group and $\CC\subseteq G\setminus\{e\}$ be an
ad-stable subset with $K$ the associated Killing form (\ref{killing})
on $\CL=k\CC$.

\begin{lemma}\label{lemZ} 
  If $\CC\cap (\CC.c)\ne\emptyset$ for some nontrivial $c\in Z(G)$
  then $K$ is degenerate. In particular, if $|Z(G)\cap\CC|>1$ then $K$
  is degenerate.
\end{lemma}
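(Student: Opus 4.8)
The plan is to exploit the fact that multiplication by a central element leaves centralisers unchanged, which forces two rows of the matrix of $K$ to coincide and thereby produces an explicit element of the radical.

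First I would record the key identity: for any nontrivial $c\in Z(G)$ and any $g\in G$ one has $Z(gc)=Z(g)$. Indeed, an element $h$ commutes with $gc$ precisely when $hgc=gch=ghc$, i.e. when $hg=gh$, using centrality of $c$ to move it past $h$ and then cancelling it. Consequently, from the formula $\chi_\CL(g)=|Z(g)\cap\CC|$ we obtain $\chi_\CL(gc)=\chi_\CL(g)$ for every $g$: the character of the conjugation representation is invariant under multiplication by central elements.

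Next I would translate the hypothesis. The condition $\CC\cap(\CC.c)\ne\emptyset$ says there is some $b\in\CC$ with $bc\in\CC$ as well, so that both $x_b$ and $x_{bc}$ are basis vectors of $\CL$, and moreover $bc\ne b$ since $c$ is nontrivial. I would then compute, for an arbitrary $d\in\CC$,
\[ K(x_{bc},x_d)=\chi_\CL(bcd)=\chi_\CL(bdc)=\chi_\CL(bd)=K(x_b,x_d), \]
where the middle equalities use that $c$ is central (so $bcd=bdc$) together with the invariance $\chi_\CL((bd)c)=\chi_\CL(bd)$ just established. Hence the rows of $K$ indexed by $bc$ and by $b$ are identical, so the nonzero vector $x_{bc}-x_b$ satisfies $K(x_{bc}-x_b,x_d)=0$ for all $d\in\CC$. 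Since $K$ is symmetric this vector lies in the radical of $K$, so $K$ is degenerate.

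Finally, for the special case $|Z(G)\cap\CC|>1$, I would pick two distinct central elements $z_1\ne z_2$ in $\CC$ and set $c=z_2z_1^{-1}$, which is central and nontrivial. As central elements commute, $z_1c=z_2\in\CC$, whence $z_2\in\CC\cap(\CC.c)$ and the previous paragraph applies. The only point demanding care — and essentially the crux of the whole argument — is pinning down the equality $Z(gc)=Z(g)$ rather than merely the inclusion $Z(gc)\supseteq Z(g)$; once that is in hand the rest is forced, and I do not expect any further obstacle.
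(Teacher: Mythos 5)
Your proof is correct and follows essentially the same route as the paper's: both arguments use the fact that multiplying by a central element does not change centralisers, hence $\chi_\CL(gc)=\chi_\CL(g)$, which forces two identical rows (equivalently, columns) of the matrix of $K$ and an explicit radical vector $x_{bc}-x_b$; the reduction of the second statement to the first via $c=z_2z_1^{-1}$ also matches the paper. The only difference is cosmetic — you spell out the identity $Z(gc)=Z(g)$ that the paper uses implicitly.
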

\begin{proof}
  Looking at $K$ as a matrix with rows and columns labelled by
  $\CC$. If $b=b'c$ where $b,b'\in \CC$ and $c\in Z(G)\setminus\{e\}$
  then $K(a,b)=|Z(ab)\cap\CC|=|Z(ab'c)\cap\CC|=|Z(ab')\cap\CC|=K(a,{b'})$ for all $a\in
  \CC$, hence $K$ has a repeated column.  If $b,b'\in Z(G)\cap\CC$ are
  distinct then $c=b'{}^{-1}b$ fits the first part.
\end{proof}

\begin{corollary}\label{ZG} 
  If $|G|>2$ and $|Z(G)|>1$ then the Killing form for the universal
  calculus is degenerate.
\end{corollary}
\begin{proof}
  For the universal calculus $\CC=G\setminus\{e\}$ and in the
  preceding lemma we can take any nontrivial $c\in Z(G)$, any $b'\ne
  c^{-1},e$ and $b=b'c$. Then $b\in\CC\cap(\CC c)$.
\end{proof}

If $G$ has order 2 then $K$ {\em is} a $1\times 1$ matrix and {\em is}
nondegenerate. We therefore only need to investigate the universal
calculus in the case where $|G|>2$ and $Z(G)=\{e\}$. Even for
nonuniversal calculi it will be necessary to avoid too much
intersection with $Z(G)$ as Lemma~\ref{lemZ} shows.

\begin{proposition}\label{D2n} 
  Let $D_{2n}$ be the dihedral group $\Z_n\rtimes \Z_2=\<a,x\>$ with
  relations $x^2=e$, $a^n=e$, $xa=a^{-1}x$.
  \begin{enumerate}
  \item For odd $n$, the universal calculus is nondegenerate
  \item For odd $n$, the order $2$ conjugacy classes
    $\CC_i=\{a^i,a^{-i}\}$, $i=1,\cdots,{n-1\over2}$ have degenerate
    $K$ and we have a single generating conjugacy class $\CC_0=\{  a^k x\, |\, 1\le k\le n\}$ of reflections, 
    which has nondegenerate $K$.
  \item For even $n$, the universal calculus has degenerate $K$.
  \end{enumerate}
  Hence for odd $n$, $D_{2n}$ is nondegenerate and class-nondegenerate
  in the sense of Definition~\ref{nondeg}.
\end{proposition}
\begin{proof} Part (3) is an application of Corollary~\ref{ZG} since
  when $n$ is even the centre is $\{e,a^{n\over 2}\}$. Now let us write 
  $\Z_n$ for the cyclic subgroup of rotations in $D_{2n}$. For part (1) 
  we use that the centralisers are
  \[ Z(a^i)=\begin{cases}\Z_n&{\rm if\ } i\ne 0\cr D_{2n}&{\rm
      otherwise}\end{cases},\qquad Z(a^ix)=\{e,a^ix\}.\] 
 Hence in the basis $\{a, a^2, \dotsc, a^{n-1}, x, ax,
  \dotsc, a^{n-1}x\}$ we have
 \begin{eqnarray*}
	K & = & \begin{pmatrix}
  (n-1)\theta_{n-1,n-1} + n \bar{1}_{n-1,n-1} & \theta_{n-1,n} \\
  \theta_{n,n-1} & (n-1)\theta_{n,n} + n1_{n,n},
\end{pmatrix}
\end{eqnarray*}
where $\theta_{i,j}$ is the matrix with $i$ rows and $j$ columns, and
all entries equal to  $1$, and $1_{j,j}$ is the $j\times j$ identity matrix, 
and $\bar 1_{j,j}$ is the permutation matrix which has 1's along the anti-diagonal  
and 0's 
elsewhere. 
The matrix $K$ is straightforwardly seen to be invertible with inverse
\[
K^{-1} = {1\over m n} \begin{pmatrix} -(n^2-n-1)\theta_{n-1,n-1}+ m
  \bar 1_{n-1,n-1} & -\theta_{n-1,n} \cr -\theta_{n,n-1} &
  -(n-1)^2\theta_{n,n}+m 1_{n,n}
\end{pmatrix},
\]
where $m=1-n^2+n^3$. 
 
 Note that (1) also follows later from
Theorem~\ref{roth} as one can easily see that $D_{2n}$ for $n$ odd has
the Roth property. Namely for $i\ne 0$ the conjugation representation $\C\CC_i$ 
decomposes as $\C\CC_i=1\oplus\bar 1$ where $\bar 1$ is
spanned by $a^i-a^{-i}$ so that conjugation by $x$ acts  as $-1$ and
$a$ acts trivially, and $1$ stands for the trivial representation. Meanwhile
$\C\CC_0=\C\theta\oplus(\bigoplus_{k=1}^{n-1\over 2}V_k)$ where
$\theta = \sum_{i=0}^{n-1} a^ix$ is the sum of all the elements in
$\CC_0$ and the $V_k$ are 2-dimensional irreps spanned by
$v^\pm_k=\sum_{j=0}^{n-1}e^{{\pi\imath jk\over n}}a^{\mp j}x$, where
conjugation by $x$ gives transposition and conjugation by $a$ gives
multiplication by a phase factor of $e^{2\pi \imath k\over n}$. 
Hence all $\frac {n+1}2$ irreps  occur
in the conjugation representation of the group  $D_{2n}$. 

For (2) on $\CC_i$ the entries of $K$ are 2, so these have degenerate 
$K$, while $\CC_0$ has
Killing form $K(a^ix,a^jx)=|\CC_0\cap Z(a^{i-j})|=n\delta_{ij}$ so
this is nondegenerate. The  $\CC_i$ classes do not generate,
while $\CC_0$ generates so the group is
class-nondegenerate. \end{proof}

Along the same lines it seems likely that $D_{2n}$ is in fact 
strongly nondegenerate for all
$n$ odd, and this has been experimentally 
confirmed for dihedral groups of small order, but a general proof 
would be significantly more
complicated than the universal case above and is deferred to
elsewhere. Clearly it is not absolutely nondegenerate.

 \begin{proposition}\label{sig} 
   If $\CGe$ has nondegenerate $K$ then the index of the latter is
   equal to the number of involutions in $\CC$.
 \end{proposition}
 \begin{proof} Let $\pi(a)$ be the matrix of $a\in\CC$ in the
   conjugation representation. As this is a permutation matrix, its
   inverse is its transpose. Hence if $a$ is an involution $\pi(a)$ is
   real and symmetric. We may identify $\C\CC$ with $\C^{|\CC|}$ using $\CC$
   as a basis. We denote by $\bar v$ the complex conjugate
 of $v\in \C\CC$ defined using this identification.  We also let $(\ )^\dagger$ denote the
   associated hermitian transpose.
 We decompose $\C\CC$ into `symmetric'
   and `anti-symmetric' parts, $\C\CC=S\oplus A$.  Here $S$ has
   a basis made up of the involutions in $\C$ and the elements $a+a^{-1}$ for all $a\in\CC$ not an
   involution, and  $A$ has a basis given by the non-zero elements of the form $a-a^{-1}$,
    which under $\pi$ go to real antisymmetric matrices. 
   Consider $v\in
   S$ and $w\in A$. Then $\pi(\bar v)=\pi(v)^\dagger$ because the
   basis elements go to real symmetric matrices, and therefore $K(\bar v,v)={\rm
     Tr}(\pi(v)^\dagger\pi(v))\ge 0$. Similarly $\pi(\bar
   w)=-\pi(w)^\dagger$ and $K(\bar w,w)=-{\rm
     Tr}(\pi(w)^\dagger\pi(w))\le 0$. Finally $K(\bar v,w)=0$ as the 
   trace of the product of  a symmetric and an antisymmetric matrix.
   If $K$ is nondegenerate then $K(\bar v,v)=0$ is not possible for
   $v\ne 0$ since the underlying real symmetric matrix of $K$ in our
   basis has no 0-eigenspace. Hence in this case $(\dim(S), \dim(A))$
   is the signature of $K$, their difference is the number of
   involutions.  \end{proof}

 Clearly, one has a similar result over $\R$ without complex conjugation.
 
  Next we associate some auxiliary objects to each ad-stable $\CGe$, namely
 \begin{equation}\label{thetaetc}
	\theta = \sum_{a\in\CC}a,\quad \theta^*(a) = 1,\quad\lambda^*(a) = |Z(a)\cap\CC|,\quad\forall a\in \CC
\end{equation}
where $\theta^*,\lambda^*$ are linear functions on $\C\CC$ defined on
basis elements. In a matrix-vector notation, we also define some
numerical constants in $\Q$ 
\begin{equation}\label{munu}
	\mu = \theta\cdot K^{-1}\theta,\quad \nu = \lambda \cdot K^{-1}\theta,\quad \rho = \lambda \cdot  K^{-1}\lambda,
\end{equation}
associated to any  $K$ for which $\theta$ and $\lambda$ are in the image.
Here we are working in our fixed basis $\CC$ and $\cdot$ is the dot
product of vectors in $\C\CC$ or Euclidean inner product in our
basis.  From elementary linear algebra\cite{Lei} the quantities 
$\mu,\nu$ and $\rho$ are well defined independent of the choice
of representatives $K^{-1} \theta$ and $K^{-1}\lambda$. 
For example, if $Kv=\theta$ and $K w=\lambda$
then 
\begin{eqnarray*} 
\nu:=\lambda^t v= w^t K^t v= w^t K v=w^t \theta,
\end{eqnarray*}
which is independent of the choice of $v$ because of its expression in terms
of $w$, and independent of $w$ because of its expression in terms of $v$.
In the case where $K$ is invertible, $\mu$ is the sum of all the entries of $K^{-1}$.


Also note that if $G_1,G_2$ are groups with $\CC_1,\CC_2$ ad-stable
subsets not containing the identity then
\[
\CC_1\times \CC_2,\quad \CC_1\sqcup\CC_2 =
\CC_1\times\{e\}\cup\{e\}\times\CC_2\subseteq G_1\times
G_2\setminus\{e\}
\]
satisfy the same properties in $G_1\times G_2$. The first of these has
\begin{equation}\label{Kprod}
	K_{\CC_1\times\CC_2}((a,b),(c,d)) = K_1(a,c)K_2(b,d),\quad \forall a,c\in \CC_1,\ b,d\in\CC_2
\end{equation}
and this is clearly nondegenerate if $K_i$ are.  The second (the
disjoint union) is the analogue of the direct sum of Lie algebra
structures on the direct product of Lie groups.

\begin{proposition}\label{Kdis} 
  Let $G_1,\CC_1$ and $G_2,\CC_2$ be two finite groups with ad-stable
  subsets $\CC_1,\CC_2$ and $K_1,K_2$ nondegenerate. Then
  $K_{\CC_1\sqcup\CC_2}$ is nondegenerate if and only if
  \[
  \det\begin{pmatrix} \rho_1 & d_2\nu_1 & 1 & \nu_1 \cr d_1\nu_2 &
    \rho_2 & \nu_2 & 1 \cr 1+d_1\mu_2 & \nu_2 & \mu_2 & 0 \cr \nu_1 &
    1+d_2\mu_2 & 0 & \mu_1
  \end{pmatrix}\ne 0
  \]
  where $d_i=|\CC_i|$ and $\mu_i,\nu_i,\rho_i$ are associated to $K_i$
  as in (\ref{munu}).
\end{proposition}
\begin{proof} When $a,b\in \CC_1$ we have
  $K(a,b)=|(\CC_1\sqcup\CC_2)\cap Z(ab)|=K_1(a,b)+d_2$ as all
  elements of the form $\{e\}\times\CC_2$ commute with
  $(ab,e)$. Similarly when $a\in \CC_1, b\in\CC_2$ we will have
  $K(a,b)=\lambda_1(a)+\lambda_2(b)$. In block matrix form this looks
  like
  \[
  K_{\CC_1\sqcup\CC_2} =
  \begin{pmatrix}
    K_1 & 0 \cr 0 & K_2
  \end{pmatrix} +
  \begin{pmatrix}
    \theta_1\tens\theta_1^* d_2\ \ & \lambda_1\tens
    \theta_2^*+\theta_1\tens\lambda_2^* \cr
    \lambda_2\tens\theta_1^*+\theta_2\tens\lambda_1^* \ \ & d_1
    \theta_2\tens\theta_2^*
  \end{pmatrix}\] Hence for $v+w\in \C\CC_1\oplus\C\CC_2$ as a column
  vector to be in the kernel means
  \[
  K_1v+\theta_1 (d_2 \theta_1\cdot v + \lambda_2\cdot w) +
  \lambda_1\theta_1\cdot w = 0, \quad K_2w + \theta_2(d_2
  \theta_2\cdot w + \lambda_1\cdot v) = 0.
  \]
  When the $K_i$ are invertible we write these as
  \begin{equation}\label{vw}
	v + K_1^{-1}\theta_1(d_2\alpha+\delta) + K_1^{-1}\lambda_1\beta = 0, 
	\quad w + K_2^{-1}\theta_2(d_1\beta + \gamma) + K_2^{-1}\lambda_2\alpha = 0
\end{equation}
where
\[
\alpha = \theta_1\cdot v,\quad \beta = \theta_2\cdot w,\quad \gamma =
\lambda_1\cdot v,\quad \delta=\lambda_2\cdot w.
\]
We now apply $\theta_i\cdot$ and $\lambda_i\cdot$ to (\ref{vw}) to obtain
four equations for these four scalars, described by the displayed
matrix.  Hence $\alpha,\beta,\gamma,\delta$ are zero and hence $v,w$
are zero, unless the stated determinant is not zero.  Conversely if
the determinant vanishes we may solve for $\alpha,\beta,\gamma,\delta$
hence for $v,w$ and $K$ is degenerate. \end{proof}

The result suggests that `generically', i.e. unless the determinant
accidentally vanishes, nondegenerate Killing forms remain
nondegenerate for the direct sum `Lie algebra' structure on the direct
product of two groups.
\begin{example}\label{Dnmu}
  For $D_{2n}$ with $n\ge 3$ odd and the universal calculus, we have $
  \lambda=(n-1,\cdots,n-1,1,\cdots, 1)$ and
  \[
  \begin{array}{rclrcl}
  d & = & 2n-1, \qquad & \mu & =& \displaystyle \frac{3-4 n+2 n^2}{1-n^2+n^3},  \vspace{1ex} \\
  \nu & =& \displaystyle \frac{1+n-2 n^2+n^3}{1-n^2+n^3}, \qquad&\rho & =&  \displaystyle \frac{-1+2 n+2 n^2-3 n^3+n^4}{1-n^2+n^3}
\end{array}
\]
from formulae in the proof of Proposition~\ref{D2n}. Then any two such
$D_{2n}, D_{2n'}$ have nondegenerate $D_{2n}\times D_{2n'}$ with the
direct sum `Lie algebra'. Here the determinant can be checked for
small $n,n'$ while at least for large (in fact all) $n,n'\ge 3$ the
determinant grows more negative as either $n,n'$ increase and hence is
never zero. \end{example}

We will study the determinant criterion further in Section~5 in the
case where $\CC_i$ are conjugacy classes.

\section{Nondegeneracy for the universal calculus}

In this section we will exclusively study the case
$\CC=G\setminus\{e\}$ where $G$ is a finite group, i.e. the universal
calculus and its associated `Lie algebra' structure on $G$. We have
already seen in Corollary~\ref{ZG} that for a group to be
nondegenerate in the sense of Definition~\ref{nondeg} it will at least
have to be centreless or $\Z_2$.

In the present setting of the universal calculus we have
\[
K(a,b)=|Z(ab)|-1,\quad \forall a,b\in \CC
\]
and we similarly have
\[ \lambda^*(a)=|Z(a)|-1,\quad \forall a\in \CC\] which we extend as a linear function on $k\CC$. This is the character of the
conjugation representation on $\CC$ restricted to $\CC$. Also in our case the
`inner generator' (\ref{thetaetc}) is
\[ \theta=\Lambda - e\] where $\Lambda=\sum_{g\in G}g$ is the integral
in the group algebra. Similarly $ \theta^*(a)=1$ for all $a\in \CC$ is
the integral on the group regarded as a linear function on $\C \CC$.

\begin{lemma}\label{munuN} For the universal calculus on a finite
  group,
  \[ K(\theta,\ )=-\lambda^* + |G|(N-1)\theta^*\] where $N$ is the
  number of conjugacy classes in $G$ including the trivial one. If $\lambda$ or $\theta$ are in the image
  of the  Killing form  then the associated constants $\mu,\nu,\rho$ in (\ref{munu}) are defined and related by
  \[ 1-\nu=|G|(1-(N-1)\mu),\quad |G|-1-\rho=|G|^2(N-1)(1-(N-1)\mu).\]
\end{lemma}
\begin{proof} We consider $\lambda^*$ to be defined on all of $G$ by the
  formula (the trace of the conjugation representation on
  $\C\CC$). Then for $h\in G$,
  \[ K(\theta,h)=\sum_{g\ne e}\lambda^*(gh)=\sum_{g\ne
    h}\lambda^*(g)=-\lambda^*(h)+\sum_{g\in G}\lambda^*(g)\] so that
  $K(\theta,\ )=-\lambda^*+\lambda^*(\Lambda)\theta^*$. Moreover,
  $\lambda^*(\Lambda)=(\sum_{g\in G}|Z(g)|)-|G|=|G|(N-1)$ by the orbit
  counting lemma. Also note that $\lambda^*(\theta)=|G|(N-2)+1$. This gives the displayed formula. Clearly then if either $\theta$ or $\lambda$ are in the image of 
  $K$ then so is the other so that $\mu,\nu,\rho$ are well-defined. Using a vector-matrix notation, we have
  $\theta=-K^{-1}\lambda+|G|(N-1)K^{-1}\theta+m$ for any choice of inverse elements and some $m\in \ker K$ and applying 
  $\theta\cdot$ and $\lambda\cdot$ to this to give relations
  \[d+ \nu=|G|\mu(N-1),\quad |G|(N-1)+\rho-d=|G|(N-1)\nu\] which we
  write as stated. Here $\theta.m=\lambda.m=0$ by writing $\theta,\lambda$ as in the image of $K$ and using the symmetry of $K$ to move it over to operate on $m$. 
  (Note also that $(\nu-1)(\nu+d)=(\rho-d)\mu$ as a
  consequence). Of course $d:=|\CC|=|G|-1$ in the present context.
\end{proof}

Our main result of this section is the following theorem.

\begin{theorem} \label{roth} Let $G$ be a finite group such that the
  conjugation representation on $\C G$ contains every irrep (`Roth
  property'). Then it is nondegenerate.
\end{theorem}
We will prove this by proving a more general result,
Proposition~\ref{rothW}. Here we work with the expression
\eqref{e:KViaChar}. We note that this formulation of the 
Killing form via the character of the conjugation representation $\C\CC$ 
makes sense for any representation $W$ as a bilinear form on $\C G$. Namely we let
\[
K_W(a,b) := \chi_W(ab)
\]
where $\{a\ |\ a\in G\}$ is a basis of $\C G$. It is well-known that
this symmetric bilinear form on $\C G$ is nondegenerate if and only if $W$
contains every irrep of $G$ with positive multiplicity. This follows
from semisimplicity of the group algebra $\C G$ and general facts
about semisimple algebras. Namely, if an algebra is semisimple then it
is a direct sum of matrix blocks. If $W=\bigoplus_i n_iV_i$ for some
multiplicities $n_i$ of irreps $V_i$ then $K_W$ has a block form with
$n_i$ times the Euclidean inner product on each matrix block. This is
because an element in a matrix block corresponding to a particular 
irrep acts as zero by left multiplication on any other block and hence 
in any other irrep. Hence $K_W$ is nondegenerate on $\C G$ if and only if all the $n_i>0$.

\begin{proposition}\label{rothW}
  Let $G$ be a finite group and $W$ be a representation of $G$ for
  which every irreducible representation appears in $W$ with strictly
  positive multiplicity. Then the restriction of $K_W$ to
  $\C.(G\setminus \{e\})$ is nondegenerate.
\end{proposition}
\begin{proof}
  Since the form $K_W$ is nondegenerate on $\C G$ we know that
  $(\C.(G\setminus\{e\}))^{\perp}$ is one-dimensional. We will know
  that $K_W$ is nondegenerate in $\C.(G\setminus\{e\})$ if there is no
  element in the perpendicular which also lies
  $\C.(G\setminus\{e\})$. We prove this by determining explicitly a
  vector spanning the line $(\C.(G\setminus\{e\}))^{\perp}$, and
  observing that it doesn't lie in $\C.(G\setminus\{e\})$.

  Suppose the irreps are $V_1,\cdots,V_n$ say. Define
$$
m=\sum_{g\in
  G}\left(\sum_{i=1}^{n}\frac{\dim(V_i)^2}{\left<\chi_{V_i},\chi_{W}\right>}\overline{{\chi}_{V_i}(g)}\right)
g
$$
This is well-defined since $\left<\chi_{V_i},\chi_{W}\right>\ne 0$ for
all $i$ by our assumption. We claim that $K_W(a,m)=0$ for all $a\ne
e$. Note that the coefficient of $e$ in $m$ is given by the formula
$$
m_e= \sum_{i=1}^{n}\frac{\dim(V_i)^3}
{\left<\chi_{V_i},\chi_{W}\right>},
$$
so is always strictly positive. Therefore $m$ does not lie in
$\C.(G\setminus\{e\})$ and the claim will imply the proposition. We
recall the standard orthogonality relations
\begin{equation*}\label{e:OR1}
\sum_{g\in G}\overline{\chi_V(g)}\ \chi_{V'}(ga)=\begin{cases}0&\text{if $V,V'$ are distinct irreps,}\\
  \frac{|G|}{\dim V}\chi_V(a) &\text{otherwise},
\end{cases}
\end{equation*}
and
\begin{equation*}\label{e:OR2}
\sum_{i}\overline{\chi_{V_i}(a_1)}\chi_{V_i}(a_2)=0\quad\text{if $a_1$ and $a_2$ are not conjugate}
\end{equation*}
and also note that
$$
\chi_W(g)=\sum_{i}\left<\chi_{V_i},\chi_W\right>\chi_{V_i}(g).
$$
Now we can compute, extending linearly,
\begin{multline*}
  K_W(m, a)=K_W\left(\sum_{g\in G}\left(\sum_{i=1}^{n}\frac{\dim(V_i)^2}{\left<\chi_{V_i},\chi_{W}\right>}\overline{{\chi}_{V_i}(g)}\right) g, a\right)\\
  =\sum_{g\in
    G}\left(\sum_{i=1}^{n}\frac{\dim(V_i)^2}{\left<\chi_{V_i},\chi_{W}\right>}\overline{{\chi}_{V_i}(g)}\right)
  \chi_W(ga)=
  \sum_{i=1}^{n}\frac{\dim(V_i)^2}{\left<\chi_{V_i},\chi_{W}\right>}\left(\sum_{g\in G}\overline{{\chi}_{V_i}(g)} \chi_W(ga)\right)\\
  =\sum_{i=1}^{n}\dim(V_i)^2\left(\sum_{g\in
      G}\overline{{\chi}_{V_i}(g)} \chi_{V_i}(ga)\right)=
  | G| \sum_{i}\dim(V_i)\chi_{V_i}(a)\\
  =|G|\sum_{i}\overline{\chi_{V_i}(e)}\chi_{V_i}(a).
\end{multline*}
The last expression vanishes whenever $a\ne e$ by orthogonality.
\end{proof}

\begin{proof} (of the theorem) Now suppose that $G$ is a finite group
  and  the conjugation representation on $\C G$ with contains every
  irrep (this is the Roth property in this context). We set
  $W=\C.(G\setminus\{e\})$ where we remove the group identity. This $W$
  still contains a copy of the trivial representation since it is a 
  permutation representation, and for example the element $\theta$ is
  invariant. As $\C G= W\oplus\C.e$ as a $G$-module, any nontrivial
  representation contained in $\C G$ must also be in
  $W$. Hence $W$ also enjoys the property of containing every
  irrep. We then apply Proposition~\ref{rothW}. \end{proof}

Note that the group will have to be centreless for the Roth property
to hold in the form stated. The simplest example where the Roth
property holds is $G=S_3$, the group of permutations on three
elements.  This is elementary enough that we can, instructively,
work out everything in our above approach by hand.

\begin{example}\label{KexampleS3} Let $G$ be $S_3$, with its three irreducible characters
$\chi_{triv},\chi_{sign}$ and $\chi_{\Delta}$ corresponding to the 
trivial, the sign and the standard representations, $V_{triv},V_{sign}$,$V_{\Delta}$.
Then it is easy to see that the conjugation representation 
$W=\C (G\setminus\{e\})$ decomposes as $V_{triv}^{\oplus 2}\oplus V_{sign}\oplus V_{\Delta}$,
so it contains ever irrep of $G$.  
The Killing form on $\C G$ for $W=\C (G\setminus\{e\})$ is
  \[ K_W=\begin{pmatrix} 5& 1& 1& 1& 2& 2\\ 1& 5 & 2 & 2& 1 & 1\\ 1 &
    2 & 5 & 2& 1 & 1\\ 1 & 2 & 2 & 5 & 1 & 1\\ 2 & 1& 1& 1& 2& 5\\ 2&
    1& 1& 1& 5& 2\end{pmatrix}\] in a basis $e, u=(12), v=(23),
  w=(13)=uvu, uv=(123)$ and $vu=(132)$. This matrix is just obtained 
  by working out $\chi_W(g h)=|Z(g h)|-1$ for all $g,h\in S_3$.  
  One can then see by direct computation that the
  lower right $5\times 5$ block in $K_W$ is invertible as required by
  Theorem~\ref{roth}. 
  Or, like in proof of Proposition~\ref{rothW}, $\C(G\setminus\{e\})^\perp$  is spanned by the single group algebra element,
  $$
  m=\frac 1 2\chi_{triv}+\chi_{sgn}+4\chi_\Delta=\frac{1}{2}(19 e-(u+v+w)-5 (uv+vu)).
  $$
  Clearly, since
  $m$ has a nonzero coefficient of $e$, it
   does not lie in $\C.(G\setminus\{e\})$.

\end{example}

Next, the Roth property is manifestly closed under direct
products. We see now how this emerges from linear algebra in our
Killing form approach and that the converse holds.
 
\begin{proposition}\label{produni} Let $G_1,G_2$ be two nondegenerate
  finite groups. Then $G_1\times G_2$ is nondegenerate {\em if and
    only if}
  \[ \mu\ne {1\over N-1}\] for each group.
  \end{proposition}\begin{proof} The Killing form for the universal
  calculus on $G_1\times G_2$ is
  \begin{eqnarray*}K((a,b),(c,d))&=&| Z_{G_1\times G_2}((ac,bd))|-1=|Z_{G_1}(ac)||Z_{G_2}(bd)|-1\\
&=&\tilde K_1(a,c)\tilde K_2(b,d)+\tilde K_1(a,c)+\tilde
 K_2(b,d)\end{eqnarray*} in terms of the extensions of Killing forms
 $K_i$ of each group to $\C G_i$. We write
 \[
 \CC = (G_1\times G_2)\setminus\{(e,e)\} = (\CC_1\times\CC_2) \sqcup
 \CC_1\sqcup\CC_2
  \]
 where $\CC_i=G_i\setminus\{e\}$. We have $K$ then in $3\times 3$
 block form with

\begin{eqnarray*}
K((a,b),(c,d)) & = & K_1(a,c)K_2(b,d)+K_1(a,c)+K_2(b,d) \\
K((a,b),(c,e)) & = & K_1(a,c)(1+\lambda_2(b))+\lambda_2(b) \\
K((a,b),(e,d)) & = & K_2(b,d)(1+\lambda_1(a))+\lambda_1(a) \\
K((a,e),(c,e)) & = & K_1(a,c)(d_2+1)+d_2 \\
K((a,e),(e,d)) & = & \lambda_1(a)\lambda_2(d)+\lambda_1(a)+\lambda_2(d)
\end{eqnarray*}
for $a,c\in\CC_1, b,d\in\CC_2$, and the other cases by symmetry. In
matrix-vector notation, for an element $v+w+z\in
\C(\CC_1\times\CC_2)\oplus\C\CC_1\oplus\C\C_2$, written as a matrix, a
column vector and a row vector respectively, to be in the kernel of
$K$ means
\[ K_1vK_2+K_1v\theta_2\theta_2^*+\theta_1\theta_1^*vK_2+K_1w
(\lambda_2^*+\theta_2^*)+\theta_1\theta_1^*w\lambda_2^*+(\lambda_1+\theta_1)z
K_2+\lambda_1z\theta_2\theta_2^*=0\]
\[
K_1v(\theta_2+\lambda_2)+\theta_1\theta_1^*v\lambda_2+|G_2|K_1w+d_2\theta_1\theta_1^*w+(\lambda_1+\theta_1)z\lambda_2+\lambda_1
z\theta_2=0\]
\[
(\theta_1^*+\lambda_1^*)vK_2+\lambda_1^*v\theta_2\theta_2^*+\lambda_1^*w(\lambda_2^*+\theta_2^*)+\theta_1^*w\lambda_2^*+|G_1|zK_2+d_1
z\theta_2\theta_2^*=0.\] We will use the notation
\[ \phi=\theta_1\cdot w,\quad \psi=z\theta_2,\quad
\sigma=\lambda_1\cdot w,\quad \tau=z\lambda_2\]
\[\alpha=\theta_1\cdot v\lambda_2,\quad \beta=\lambda_1\cdot
v\theta_2,\quad \gamma=\theta_1\cdot
v\theta_2,\quad\delta=\lambda_1v\cdot\lambda_2\] then applying
$K_i^{-1}$ our three equations for the kernel become
\[
v + v\theta_2\theta_2^*K_2^{-1} + K_1^{-1}\theta_1\theta_1^*v +
w(\lambda_2^* + \theta_2^*) K_2^{-1} +
K_1^{-1}\theta_1\phi\lambda_2^*K_2^{-1} +
K_1^{-1}(\lambda_1+\theta_1)z +
K_1^{-1}\lambda_1\psi\theta_2^*K_2^{-1} = 0
\]
\[
v(\theta_2+\lambda_2) + K_1^{-1}\theta_1\alpha +
|G_2|w+d_2K_1^{-1}\theta_1\phi + K_1^{-1}(\lambda_1 + \theta_1)\tau +
K_1^{-1}\lambda_1 \psi = 0
\]
\[
(\theta_1^* + \lambda_1^*)v + \beta\theta_2^*K_2^{-1} +
\sigma(\lambda_2^*+\theta_2^*)K_2^{-1} + \phi\lambda_2^*K_2^{-1} +
|G_1|z + d_1 \psi\theta_2^*K_2^{-1} = 0.
\]
We now apply evaluation or dot product of the relevant
$\theta_i,\lambda_i$ to the two sides of the first equation and to one
side of each of the other two, to obtain eight equations for the
scalar variables $\phi,\psi,\beta,\alpha,\sigma,\tau,\gamma,\delta$
governed in that basis order by an $8\times 8$ matrix. Specifically,
if
\[
\begin{vmatrix}
  {d_1} \nu_1 & \rho_1 & 1 & \nu_1 & 1+{d_2} & \nu_1+\rho_1 & 0 & 1 \\
  \rho_2 & {d_2} \nu_2 &\nu_2 & 1 & \nu_2+\rho_2 & 1+{d_1} & 0 & 1 \\
  \nu_1 \nu_2 & \nu_1+(1+\mu_2) \rho_1 & 1+\mu_2 & 0 & \mu_2+\nu_2 & 0 & \nu_1 & 0 \\
  \nu_2+(1+\mu_1) \rho_2 & \nu_1 \nu_2 & 0 & 1+\mu_1 & 0 & \mu_1+\nu_1 & \nu_2 & 0 \\
  \nu_2 & 1+{d_1}(1+ \mu_2) & 1+\mu_2 & 0 & \mu_2+\nu_2 & 0 & 1 & 0 \\
  1+{d_2}(1+ \mu_1) & \nu_1 & 0 & 1+\mu_1 & 0 & \mu_1+\nu_1 & 1 & 0 \\
  \mu_2+(1+\mu_1) \nu_2 & \mu_1+(1+\mu_2) \nu_1 & 0 & 0 & 0 & 0 & 1+\mu_1+\mu_2 & 0 \\
  \nu_1 \rho_2 & \nu_2 \rho_1 & \nu_2 & \nu_1 & \nu_2+\rho_2 &
  \nu_1+\rho_1 & 0 & 1
\end{vmatrix}
\ne 0
\]
then these variables are all zero and our three equations for $v,w,z$
simplify to
\begin{equation}\label{veqn}
  v+v\theta_2\theta_2^*K_2^{-1}+K_1^{-1}\theta_1\theta_1^*v+w
  (\lambda_2^*+\theta_2^*)K_2^{-1}+K_1^{-1}(\lambda_1+\theta_1)z = 0
\end{equation}
\[
v(\theta_2+\lambda_2)+|G_2|w = 0,\quad
(\theta_1^*+\lambda_1^*)v+|G_1|z = 0.
\]
The determinant here factorises with degree 2 factors
\begin{equation}\label{roth0}
(\rho-d)(1+\mu)- (\nu-1)^2
\end{equation}
for each group, so we require these not to vanish, which given
Lemma~\ref{munuN} we write as $\mu\ne 1/(N-1)$ on each group. In this
case one can eventually solve the linear system to determine that $v=w=z=0$ so that $G_1\times G_2$ is
nondegenerate. Details are omitted in view of our later Theorem~\ref{rothmu} which shows 
that both groups are Roth and hence so is their direct product, after which we can
use Theorem~\ref{roth}. Conversely, if $G_1$ alone obeys $\mu_1(N_1-1)=1$
so $\nu_1=1$ and $\rho_1=d_1$, then the displayed matrix has a
1-dimensional null space spanned by $\alpha=-\tau, \delta=-d_1\tau$
and $\beta=\gamma=\phi=\psi=\sigma=0$. We then solve for the vector
variables to find (for example) $x=-z, y=w=0$, $s=-d_1z$,
$t=-K_1^{-1}\lambda_1 z$ provided $z\theta_2=0$ and
$z\lambda_2=\tau$. Then $v=-K_1^{-1}\lambda_1 z$ from the equation for
$v$. We then check that any $z$ such that $z\theta_2=0$ and $w,v$ as
stated reproduce all other vectors and scalars as stated and thereby
that the equations to be in the kernel are satisfied. Hence $K$ is
degenerate. If both $G_1,G_2$ obey $\mu(N-1)=1$ then the kernel of the
displayed matrix is 2-dimensional but includes the previous one. The
solution above still applies and $K$ is degenerate.
\end{proof}

As noted in the proof, we will see in the next theorem that $\mu\ne \frac{1}{N-1}$ is characteristic of a Roth property 
group. So this proposition says that 
within the class of non-degenerate groups the Roth property
groups form the largest subclass which is closed under 
direct products. For example for $\Z_2$ is nondegenerate and not Roth thence the direct product $\Z_2\times G$ with any
nondegenerate group $G$ will be degenerate. This latter result also follows from
Corollary \ref{ZG} as $\Z_2 \times G$ has a nontrivial center.

One can also compare with Proposition~\ref{Kdis} for the disjoint union of
universal `Lie algebra' structures on the direct product of two
nondegenerate finite groups. Here we find by contrast that if $G_1$ is
non-Roth then $G_1\times G_2$ with the disjoint union structure has
nondegenerate Killing form if and only if $G_2$ is Roth.  Thus for
example $\Z_2\times G$ for $G$ any Roth property group and with the
disjoint union {\it will} have nondegenerate Killing form. 

We now give our second  main
result of the section, which is the mentioned complete characterisation of when a finite group is Roth in terms of the Killing form, irrespective
of nondegeneracy.

\begin{theorem}\label{rothmu} Let $G$ be a nontrivial finite 
  group and $N$ the number of conjugacy classes.  The constants $\mu,\nu,\rho$ associated
 in \eqref{munu} to the Killing form for the universal calculus are well-defined. Moreover, the following are equivalent.
  \begin{enumerate}
  \item $G$ has the Roth property
  \item $\mu\ne \frac{1}{N-1}$
  \item $\nu\ne 1$
  \item $\rho\ne |G|-1$.
  \end{enumerate}
  In the Roth case
  \[
  \mu = {1\over n_0}\left(1-{1\over n_0\sum_j{ \dim(V_j)^3\over
        n_j}}\right)
  \]
  where $n_j$ is the multiplicity of irrep $V_j$ in the representation
  on $W=\C.(G\setminus\{e\})$ and $n_0=N-1$ is the multiplicity of
  the trivial representation.
 \end{theorem}
\begin{proof} (i) Using the same methods as in the proof of Lemma~\ref{munuN}, we 
regard the characters $\chi_{V_i}$ by restriction as vectors with entries the
  $|G|-1$ values at the different points of $\CC$. Then
  \begin{equation}\label{Kchi}
K(\overline{\chi_{V_j}},h) = 
\sum_{g\in\CC}\chi_{V_j}(g^{-1})\lambda^*(gh) = 
-\lambda^*(h)\dim(V_j)+{n_j |G|\over\dim(V_j)}\chi_{V_j}(h)
\end{equation}
using the orthogonality of characters. We will need this formula.

 (ii) If $G$ is missing no irreducible representations in its conjugation representation then we know 
that $K$ is invertible by Theorem~\ref{roth} so that $\mu,\nu,\rho$ are defined.
Alternatively, suppose $G$ is missing $V_1$, say. The formula \eqref{Kchi} tells us that 
  $$
  K(\overline{\chi_{V_1}})=-\dim(V_1)\lambda.
  $$ 
  so  $\lambda$ is in the image of $K$ in this case. 
Lemma~\ref{munuN} then implies that $\theta$ is also in the image of $K$, namely
$$
K( \theta - \frac{\overline{\chi_{V_1}}}{\dim(V_1)})=|G| (N-1) \theta.
$$ 
Hence Lemma~\ref{munuN} applies, $\mu,\nu,\rho$ are well defined for any finite group
and are related by the formulae stated there. This also means that (2)-(4) are all equivalent.

(iii) In the non-Roth case, the vector $-\overline{\chi_{V_1}}/\dim(V_1)$ lies in the inverse image $K^{-1}\lambda$. Here $V_1$ is, as in (ii).  We take the dot-product with  $\lambda$ and use orthogonality of characters to find 
\begin{multline*} \rho=-\frac{1}{\dim(V_1)}\sum_{g\ne e}\lambda(g)\overline{\chi_{V_1}}(g)=
\\
= -\frac{1}{\dim(V_1)}\left(\sum_{g\in G} \chi_{\C G\setminus\{e\}}(g)\overline{\chi_{V_1}}(g)\right)+ |G\setminus\{e\}| = |G|-1,
\end{multline*}
 because $V_1$ does not occur in the conjugation representation on $\C (G\setminus\{e\})$, and
 where we rewrote  $\lambda(g)=\chi_{\C G\setminus\{e\}}(g)$.

 
(iv) In the Roth case  since
$K$ is invertible, the formula \eqref{Kchi} in vector notation gives
\begin{equation}\label{Kinvchi}
\overline{\chi_{V_j}} = - \dim(V_j)K^{-1}\lambda+ {n_j |G|\over\dim(V_j)}K^{-1}\chi_{V_j}.
\end{equation}
We multiply both sides by $\dim(V_j)^2/n_j$ and sum over $j$. Now
the right hand summand becomes $|G|\, K^{-1}(\sum_j\dim(V_j)\chi_{V_j})$. But
 $\sum_j\dim(V_j)\chi_{V_j}$ is the character of the regular 
 representation and has support only on $e$. Hence 
  regarded by restriction as a vector in $\C\CC=\C (G\setminus\{e\})$, this is zero. Now
taking the dot product with $\lambda$ we have
\[
-\left(\sum_j{\dim(V_j)^3\over n_j}\right)\rho = \sum_j
{\dim(V_j)^2\over n_j}\lambda\cdot\overline{\chi_{V_j}} = \sum_j
{\dim(V_j)^2\over n_j}(n_j |G|-|\CC|\dim(V_j))
\]
which gives $|\CC|-\rho=|G|^2/(\sum_j\dim(V_j)^3/n_j)$ and hence the
formula for $\mu$ using Lemma~\ref{munuN}. \end{proof}
 
  \begin{example} For the Roth property group $S_3$ the formula
 for $\mu$ above gives that $\mu=\frac{9}{19}$, which agrees with what 
 we already computed in Example~\ref{Dnmu} (since $S_3\cong D_{6}$)
 and can also easily be checked from Example~\ref{KexampleS3}.  
 \end{example}

Theorem~\ref{rothmu} characterises the Roth property groups among
all finite groups in our Killing form approach. All nontrivial finite abelian groups are non-Roth and
it is easy to see that 
$\mu=\frac{1}{N-1}$. The first non-Roth centreless nondegenerate group is the small
group with label $(400, 207)$ (cf \cite{SG}) of order 400 as mentioned
in the introduction. The first finite simple nonabelian non-Roth group is $PSU(3,3)$,
which is not nondegenerate but where $\mu=\frac{1}{N-1}$ still applies as can be checked. 
Also, an immediate consequence for the
formula for $\mu$ is that for Roth property groups
\[ {1\over N} < \mu < {1\over N-1}\]
as follows from the observation for all $j\ne 0$ that $0< \dim(V_j)$,
$n_j<|G|-1=\sum_{j\ne 0}\dim(V_j)^2$. One can do better here, for
example these observations actually imply $\mu<1/(N-1+ {{1\over( |G|-1)^2}})$. In
the case of $D_{2n}$ with $n$ odd using the results in
Example~\ref{Dnmu} one finds $\mu\to {1\over{(N-1)}}$ strictly from below as
$n\to \infty$.

Going the other way, when the group is not Roth we can still say something
about the Killing form.

\begin{proposition}\label{deg} 
  Let $G$ be a finite group. $K$ for the universal calculus is nondegenerate on the
  subspace of invariant vectors inside $\C.(G\setminus\{e\})$ iff the conjugation representation on $\C G$ is missing at most one irreducible representation. \end{proposition}
\begin{proof} (i) Suppose $G$ is missing two distinct irreps, say $V_1,V_2$. In part (ii) of the proof of Theorem~\ref{rothmu} we 
have $-\overline{\chi_{V_1}}/\dim(V_1)$ and now also  $-\overline{\chi_{V_2}}/\dim(V_2)$ are in the preimage of $\lambda$. As the irreps are non-equivalent their characters are
linearly independent and hence $K$ has a kernel, even when restricted to the subspace of invariant vectors. 

(ii) We return to the formula (\ref{Kchi}) in vector form and
  suppose that $v=\sum_{i=1}^n v_i\overline{ \chi_{V_i}}$ where we
  omit $\overline{\chi_{V_0}}$ and keep the rest as basis of the
  `class vectors' $Z=(\C \CC)^{Ad}$ of vectors invariant under
  conjugation. We let $\delta=(\dim(V_i))$ be the vector of dimensions
  in this basis so $\delta_i=\dim(V_i)$ for $i=1\cdots N-1$. Then
  $Kv=0$ is equivalent to
  \[
  (v\cdot \delta)\lambda = |G|\sum_iv_i {n_i\over\delta_i}\chi_{V_i}
  \]
  but $\lambda=\sum_{i=0}^{N-1}
  n_i\chi_{V_i}=\sum_{i=1}^{N-1}(n_i-n_0\delta_i)\chi_{V_i}$ so this
  is equivalent to
  \[
  n_i (|G|
  v_i-\delta_i(v\cdot\delta))+n_0\delta_i^2(v\cdot\delta)=0,\quad\forall
  i=1\cdots N-1.
  \]
  Now if $n_1=0$ then $v\cdot\delta=0$. Putting this information into the displayed
  equation with the assumption $n_i\ne 0$ for $i>1$ gives $v_i=0$ for $i>1$. In this case  $v\cdot\delta=0$ tells us that $v_1=0$ as well, i.e. $v=0$. 
  Hence $K|_Z$ is  nondegenerate, but could still be degenerate on all of $\C
  \CC$. \end{proof}

In summary, if no irreps are missing in the conjugation representation then the group is
nondegenerate. If two or more irreps are missing then the group is not
nondegenerate as the Killing form for the universal calculus is degenerate. As far as we know the case of one irrep missing can go
either way but if the group is nondegenerate but not Roth then it must
have precisely one irrep missing. Using the methods of \cite{Pass} one can see that the group $PSU(3,4)$
is indeed missing exactly one irrep and this is now proven \cite{Tiep} to hold for all
finite simple nonabelian non-Roth groups. So the above proposition applies and we have the immediate
corollary:

\begin{corollary} Let $G$ be a finite simple nonabelian group. Then the Killing form for the universal calculus on $G$ is nondegenerate on the subspace of conjugation-invariant vectors inside $\C.(G\setminus\{e\})$. \end{corollary}

\section{Nondegeneracy, eigenvalues and reducibility of the Killing
  form for conjugacy classes}

In this section we will be interested in $\CC$ a conjugacy class but we
start off more generally. Let $G$ be a finite group and $\CGe$ an
ad-stable subset. Since we have a particular basis for $\CL=\C\CC$ we
have already had occasion to regard this for convenience as an
operator
\[
K:\CL\to \CL,\quad K(a) = \sum_{b\in \CC}K(a,b)b,
\]
and we now look at its properties as such in more detail. Note that by
construction this operator is ad-invariant and hence its eigenspaces
provide a natural decomposition of $\C \CC$ into subrepresentations.
Nondegeneracy in this language means of course that $K$ has no zero
eigenvalues in its spectrum. As $K$ is real and symmetric in our basis
it can be diagonalised over $\R$.
However, it can also be viewed as a hermitian matrix or self-adjoint
operator over $\C$. Moreover, the entries of $K$ are non-negative
integers. We give some basic consequences of these properties here.

\begin{proposition} Suppose $V$ is an irreducible representation of
  $\C G$ which is defined over $\Q$. So $V=V_\Q\otimes_\Q \C$ for an
  irreducible representation $V_\Q$ of $\Q G$. Furthermore, suppose
  $\mathcal C$ is a conjugacy class in $G$ such that $V$ occurs in the
  conjugation representation $\C\mathcal C$.

  If the isotypical component of $V$ in $\C\mathcal C$ is contained in
  a single eigenspace of the Killing matrix $K$ of $\mathcal C$, then
  the corresponding eigenvalue lies in $\Z$.
\end{proposition}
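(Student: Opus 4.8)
The plan is to exploit the fact that the matrix of $K$ (more precisely of $\eta^{-1}\circ K$) in the basis $\mathcal{C}$ has entries $|Z(ab)\cap\mathcal{C}|$, which are non-negative integers. Hence its characteristic polynomial is monic with integer coefficients, and every eigenvalue is an algebraic integer. Since an algebraic integer which happens to be rational is automatically an ordinary integer, the whole problem reduces to showing that the eigenvalue $\lambda$ of $K$ on the isotypical component is rational.

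To obtain rationality I would bring in the hypothesis that $V$ is defined over $\Q$. This forces $\chi_V$ to be rational-valued, and since character values are always algebraic integers, $\chi_V(g)\in\Z$ for all $g\in G$. Let $\rho$ denote the conjugation representation of $G$ on $W=\C\mathcal{C}$; each $\rho(g)$ permutes $\mathcal{C}$ and hence is a permutation matrix with integer entries. The projector onto the $V$-isotypical component $M\subseteq W$ is the standard
\[ P=\frac{\dim V}{|G|}\sum_{g\in G}\chi_V(g)\,\rho(g), \]
where I have used $\overline{\chi_V}=\chi_V$ as $\chi_V$ is real. By the two previous observations, $P$ has rational entries.

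Next I would feed in the single-eigenspace hypothesis. Since $P$ has image exactly $M$ and $M$ lies entirely in the $\lambda$-eigenspace of $K$, we get $KP=\lambda P$ as operators on $W$. Taking traces gives $\Tr(KP)=\lambda\,\Tr(P)=\lambda\dim M$, and $\dim M=\<\chi_V,\chi_W\>\dim V$ is a positive integer because $V$ occurs in $W$ by assumption. On the other hand $\Tr(KP)$ is the trace of the product of an integer matrix and a rational matrix, hence rational. Therefore $\lambda=\Tr(KP)/\dim M\in\Q$. Combining with the first paragraph, $\lambda$ is simultaneously an algebraic integer and a rational number, so $\lambda\in\Z$.

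The main points to get right are the two integrality inputs — that $\chi_V\in\Z$ follows from $V$ being defined over $\Q$, and that each $\rho(g)$ is a permutation matrix — together with the reduction step that algebraic-integrality plus rationality forces integrality. I expect the only genuinely delicate place to be the role of the hypothesis: it is used precisely to deduce $KP=\lambda P$, and without ``$M$ contained in a single eigenspace'' the quantity $\Tr(KP)/\dim M$ would only be the (still rational) multiplicity-weighted average of the eigenvalues occurring on $M$, rather than an individual eigenvalue, so no integrality conclusion about a single eigenvalue could be drawn.
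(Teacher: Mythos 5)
Your proof is correct, but it reaches the key rationality step by a genuinely different route from the paper. The paper fixes $x\in\mathcal{C}$ and uses the $G$-equivariant surjection $\pi:\C G\to\C\mathcal{C}$, $g\mapsto gxg^{-1}$, from the left-regular representation onto the conjugation representation: since $V$ is defined over $\Q$, its Wedderburn block $A_V\subset\C G$ has a basis lying in $\Q G$, and by Schur's lemma $\pi|_{A_V}$ surjects onto the isotypical component, so some rational basis element $b$ satisfies $\pi(b)\neq 0$; then $\pi(b)$ is a rational eigenvector of the integer matrix $K$, forcing its eigenvalue to be rational, and the same ``rational algebraic integer lies in $\Z$'' argument you use finishes the proof. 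You never exhibit an eigenvector at all: instead you take the canonical isotypical projector $P=\frac{\dim V}{|G|}\sum_{g}\chi_V(g)\rho(g)$, observe that it is a rational matrix because $\chi_V$ is integer-valued and each $\rho(g)$ is a permutation matrix, convert the single-eigenspace hypothesis into the operator identity $KP=\lambda P$, and extract rationality of $\lambda$ from the trace computation $\lambda=\Tr(KP)/\Tr(P)$ with $\Tr(P)=\dim M$ a positive integer. Both proofs share the same two integrality inputs (integer entries of $K$, rationality coming from the $\Q$-structure on $V$) and the identical number-theoretic finish. What the paper's construction buys is an explicit rational vector spanning a copy of $V$ inside $\C\mathcal{C}$ --- a device it reuses later, both for embedding Specht modules into conjugation representations of $S_n$ and for computing eigenvalues concretely. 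What your argument buys is economy and a slight gain in generality: it bypasses the block decomposition of $\C G$ entirely, and it uses only that $\chi_V$ is rational-valued, which is strictly weaker than $V$ being defined over $\Q$ (the two notions can differ when the Schur index is nontrivial), so your proof in fact establishes the proposition under a weaker hypothesis.
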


\begin{proof} 
  Choose an element $x\in \mathcal C$ and consider the map
  \[
  \pi: \C G\to \C\mathcal C: g\mapsto gxg^{-1}
  \]
  which is a $G$-equivariant surjection from the left-regular
  representation to the conjugation representation of $G$.  Let
  $A_V\subset \C G$ denote the block of the irreducible representation
  $V$. By block decomposition of $\C G$ and Schur's lemma it follows
  that $\pi$ restricts to a surjection from $A_V $ to the isotypical
  component of $V$ in $\C\mathcal C$.  However, since $V$ was defined
  over $\Q$ it follows that $A_V$ has a basis that lies inside $\Q
  G$. Moreover by surjectivity of $\pi|_{A_V}$ onto the isotypical
  component there exists such a basis element $b$ whose image $\pi(b)$
  is nonzero. By the assumptions, $b$ is an eigenvector of the Killing
  matrix. Moreover $b$ has rational coefficients as a vector in
  $\C\mathcal C$. Since the entries of $K$ are integral and $b$ is
  rational it follows that the eigenvalue of $b$ lies in $\Q$.  On the
  other hand the integrality of $K$ implies that the eigenvalues are
  all algebraic integers.  So the eigenvalue of $b$ is a rational
  number and an algebraic integer. Therefore it must lie in $\Z$.
\end{proof}

Note that this proposition implies in particular, that if $V$ is a
complex representation of $G$ defined over $\Q$ which occurs in
$\C\mathcal C$ with multiplicity $1$, then it lies in an eigenspace of
$K$ with eigenvalue in $\Z$.  Since all representations of the
symmetric group are defined over $\Q$ (over $\Z$ even), we have the
following corollary.

\begin{corollary}
  Let $\CC$ be a nontrivial conjugacy class of $S_n$. If an
  irreducible representation of $S_n$ occurs in the conjugation
  representation $\C\CC$ with multiplicity one, then it embeds into an
  eigenspace for the corresponding Killing form with eigenvalue in
  $\Z$. \qed \end{corollary}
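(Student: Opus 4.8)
The plan is to obtain this corollary as a direct consequence of the Proposition immediately preceding it, by verifying that the hypotheses of that Proposition are automatically satisfied in the setting of the symmetric group $S_n$. The Proposition requires two inputs: that the irreducible representation $V$ is defined over $\Q$, and that the isotypical component of $V$ in $\C\CC$ lies inside a single eigenspace of the Killing matrix $K$. My aim is to show that for $S_n$ both conditions hold under the stated multiplicity-one assumption.

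First I would record the classical fact that every irreducible complex representation of $S_n$ is defined over $\Q$; indeed the Specht modules are defined over $\Z$, so $V = V_\Q \otimes_\Q \C$ with $V_\Q$ an irreducible $\Q S_n$-module, exactly as demanded by the Proposition. This disposes of the rationality hypothesis unconditionally for $S_n$. Next I would address the single-eigenspace condition. Here the multiplicity-one assumption does the work: since $K$ (viewed as the self-adjoint operator $\eta^{-1}\circ K$ on $\C\CC$) is ad-invariant, each of its eigenspaces is a subrepresentation of the conjugation representation, and $K$ therefore preserves the isotypical decomposition of $\C\CC$. If $V$ occurs in $\C\CC$ with multiplicity one, then its isotypical component is a single irreducible copy of $V$; by Schur's lemma $K$ acts on this irreducible block as a scalar, so the whole component sits inside one eigenspace of $K$. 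Thus the second hypothesis of the Proposition is met precisely because the multiplicity is one.

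With both hypotheses verified, the Proposition applies verbatim and yields that $V$ embeds into an eigenspace of $K$ whose eigenvalue lies in $\Z$, which is the assertion of the corollary. I would phrase the write-up so that the only new observation is the reduction: rationality of $S_n$-representations for the first hypothesis, and the Schur's-lemma scalar action on a multiplicity-one isotypical block for the second.

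The main thing to be careful about, rather than a genuine obstacle, is the logical direction of the single-eigenspace reasoning. One must note that $K$ commutes with the $G$-action (its ad-invariance, established in the previous section), so that it is a morphism of $G$-representations and hence acts as a scalar on each irreducible constituent; without this equivariance the eigenspaces need not respect the isotypical decomposition and the multiplicity-one hypothesis would not force the component into a single eigenspace. Once equivariance and Schur's lemma are invoked, the remaining content is entirely supplied by the Proposition, so there is no substantial calculation to grind through.
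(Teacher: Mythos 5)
Your proposal is correct and follows essentially the same route as the paper: the paper derives the corollary by the remark immediately preceding it, namely that multiplicity one forces the (single) copy of $V$ into one eigenspace of the ad-invariant operator $K$, combined with the classical fact that all irreducible representations of $S_n$ are defined over $\Q$ (indeed over $\Z$), so the Proposition applies directly. Your explicit invocation of Schur's lemma to justify the single-eigenspace hypothesis is exactly the detail the paper leaves implicit.
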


Note that an irreducible representation is rational if all its
character values are rational. This is because the matrix entries can
be obtained by projection via central idempotents in the group algebra
with coefficients defined by the characters. Similarly the character
determines whether an irreducible representation is complex in the sense of
not real.

\begin{proposition} Let $\CC\subseteq G\setminus\{e\}$ be an ad-stable
  subset.
  \begin{enumerate}
  \item If a complex irreducible representation $V$ occurs in $\C\CC$
    in an eigenspace of the associated Killing form matrix, then so
    does its dual representation (with complex conjugate character).

  \item If we consider the inverse conjugacy class $\CC^{-1}$ then the
    eigenvalues of the Killing form matrix for $\CC^{-1}$ are the same
    as the ones obtained for $\CC$, and the decompositions of the
    respective eigenspaces into irreps are equivalent.
  \end{enumerate}
 
\end{proposition}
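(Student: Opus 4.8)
The plan is to extract both parts from two structural features already in place: the Killing matrix has real (indeed integer) entries, and $\C\CC$ is the permutation representation of $G$ on the finite set $\CC$, so that $G$ acts in the basis $\{x_a\}$ by real permutation matrices. Part (1) will then come from complex conjugation in this basis, and part (2) from the $G$-equivariant relabelling $x_a\mapsto x_{a^{-1}}$.

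For part (1), I would introduce the conjugate-linear involution $\sigma\colon\C\CC\to\C\CC$ fixing each basis vector $x_a$ and conjugating coefficients. Since $g\cdot x_a=x_{gag^{-1}}$ is a real permutation matrix, $\sigma$ commutes with the $G$-action in the sense $\sigma(g\cdot v)=g\cdot\sigma(v)$; consequently $\sigma$ carries any subrepresentation isomorphic to $V$ to one isomorphic to the conjugate representation $\overline V$, and for a finite group $\overline V\cong V^*$ because $\overline{\chi_V(g)}=\chi_V(g^{-1})=\chi_{V^*}(g)$. Because $K$ has real entries and its eigenvalues $\lambda$ are real, each eigenspace $E_\lambda$ is $\sigma$-stable: from $Kv=\lambda v$ one gets $K\sigma v=\sigma K v=\lambda\sigma v$. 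Hence if the $V$-isotypic part of $\C\CC$ sits inside $E_\lambda$, applying $\sigma$ shows the $V^*$-isotypic part sits inside the same $E_\lambda$, which is the assertion.

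For part (2), I would first note that $\CC^{-1}$ is again ad-stable (inversion commutes with conjugation) and that $a\mapsto a^{-1}$ is a bijection $\CC\to\CC^{-1}$, so the rule $x_a\mapsto x_{a^{-1}}$ defines a linear isomorphism $\phi\colon\C\CC\to\C\CC^{-1}$. This $\phi$ is $G$-equivariant, since $\phi(g\cdot x_a)=x_{(gag^{-1})^{-1}}=x_{ga^{-1}g^{-1}}=g\cdot\phi(x_a)$. The crux is the matrix identity
\[
K_{\CC^{-1}}(x_{a^{-1}},x_{b^{-1}})=|Z((ba)^{-1})\cap\CC^{-1}|=|Z(ba)\cap\CC|=K_\CC(x_b,x_a)=K_\CC(x_a,x_b),
\]
using $a^{-1}b^{-1}=(ba)^{-1}$, the fact that a centraliser equals that of the inverse element and is closed under inversion (so inversion restricts to a bijection $Z(ba)\cap\CC\to Z(ba)\cap\CC^{-1}$), and finally the symmetry of $K_\CC$. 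This says precisely that $\phi\circ K_\CC=K_{\CC^{-1}}\circ\phi$, so $\phi$ restricts to a $G$-isomorphism from the $\lambda$-eigenspace of $K_\CC$ onto the $\lambda$-eigenspace of $K_{\CC^{-1}}$ for every $\lambda$. Thus the two matrices share eigenvalues with the same multiplicities, and corresponding eigenspaces are isomorphic as $G$-representations, hence decompose into the same irreducibles.

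The steps are short and I do not expect a genuine obstacle. The one place demanding care is the centraliser bookkeeping in part (2): tracking that $a^{-1}b^{-1}=(ba)^{-1}$ produces the transpose $K_\CC(x_b,x_a)$, which is harmless only because $K_\CC$ is symmetric, together with the small observation $\overline V\cong V^*$ underpinning part (1).
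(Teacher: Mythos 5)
Your proof is correct and takes essentially the same approach as the paper: part (1) rests on the realness of the $G$-action and of $K$ in the basis $\CC$ (the paper phrases this as the eigenspaces being defined over $\R$, hence real subrepresentations, while you make the conjugation operator $\sigma$ explicit), and part (2) rests on the correspondence $x_a\mapsto x_{a^{-1}}$, under which both the Killing matrices and the $G$-action matrices coincide, which is exactly the paper's intertwining argument written as an explicit map $\phi$.
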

\begin{proof}
  The conjugation representation is
  clearly defined over $\R$, and since $K$ is real and symmetric in the basis $\CC$ its
  eigenspaces are also defined over $\R$, hence real as
  subrepresentations of the conjugation representation. This implies
  the first part. For the second part we consider inversion as a
  bijection between the two ad-stable subsets. Let
  $a,b,c\in\CC$. Clearly $c$ commutes with $ab$ precisely if
  $c^{-1}$ commutes with $b^{-1}a^{-1}$. But as the Killing forms are
  symmetric, we see that the Killing forms have the same matrices in
  their respective bases. If $v\in \C\CC$ is expanded in the basis
  $\CC$ we define $\tilde v$ to be the corresponding vector in
  $\C\CC^{-1}$ with the same coefficients in the corresponding basis,
  i.e. $v,\tilde v$ are represented by the same column vector in their
  respective bases. One may readily see that the matrices for the
  action of an element of $g$ in the two cases are also identical.
  This implies the second part.
\end{proof}


By a slight abuse of notation, in the following we denote the element 
$\sum_{a\in \C} a$ by $\theta$ (its analogue as a left-invariant 1-form makes the calculus 
inner). Clearly $\theta$ spans a copy of the trivial representation in $\C\CC$, and
the unique copy if $\CC$ is a conjugacy class. We also 
recall that a matrix with non-negative entries is called 
{\em irreducible} if for all indices $i,j$ there exists $m\in\N$ such that the matrix entry
$(K^m)_{ij}\ne 0$. This is equivalent to connectedness of the graph on
the set of indices defined by an edge whenever the entry $K_{ij}\ne
0$.

 \begin{proposition}\label{thetared} 
   Let $G$ be a finite group and $\CC\subseteq G\setminus\{e\}$ a
   conjugacy class. Then $K$ has a (positive) integral maximal
   eigenvalue $\lambda_{max}$, given by the sum of any column of
   $K$. Moreover, $K$ splits onto $r$ irreducible direct summands {\em if
     and only if} the eigenspace associated to $\lambda_{max}$ has
   dimension $r$ and in this case all other eigenspace dimensions are
   divisible by $r$. In particular, if $K$ is irreducible then the
   eigenspace associated to $\lambda_{max}$ is 1-dimensional, generated by the
   eigenvector $\theta=\sum_aa$.
 \end{proposition}

\begin{proof} 
  $K(\theta)=\sum_{a,b} K(a,b)b=\sum_b c_b b$ where $c_b$ is the sum
  of the $b$'th column of the matrix of $K$. However,
  $c_{gbg^{-1}}=\sum_aK(a,{gbg^{-1}})=\sum_a K({g^{-1}ag},b)=c_b$
  after a change of variables. Hence $c_b$ is independent of $b\in\CC$
  in the case of a conjugacy class. Hence $\theta$ is an eigenvector
  of $K$ with eigenvalue the column sum.  Moreover, if $K$ is
  irreducible then by Perron-Frobenius theory there is a 1-dimensional
  maximal eigenspace with eigenvalue the column sum of $K$, i.e. with
  eigenvector $\theta$. If $K$ is not irreducible then after a
  reordering of the basis it can be presented as a direct
  sum. Iterating this, we reduce $K$ to a direct sum of some number
  $r>1$ of irreducible blocks. In fact each block will be, after
  reordering, a copy of the same irreducible matrix. This follows from
  ad-invariance of $K$ as follows. Consider an element in $G$ that
  conjugates a corner of the first block to the corresponding corner
  of another. All the indices relating to the first block belong to
  the same connected component of the graph and, by assumption, they
  are not connected to any of the indices for the other blocks, and
  this notion is ad-invariant, as $K$ is. Hence the indices relating
  to the conjugated first block must be connected to themselves and
  not to the first block. Hence the first block maps over to the
  conjugated block, and all its entries are the same when suitably
  ordered, again by ad-invariance of $K$. Once $K$ has been presented
  as $r$ blocks $K_i$, its eigenvectors will consist of $r$ parts
  forming eigenvectors for each block with the same
  eigenvalue. However, since these blocks are all irreducible and have
  the same row sum as $K$, they will each have the same maximal
  eigenvalue as $K$, and any other eigenvalues will be strictly lower.
  This implies the facts stated and justified the notation
  $\lambda_{max}$ for the column sum.  Note that the diagonal of $K$
  is always nonzero as $a$ commutes with $a^2$ for all $a\in\CC$.
  Hence $K^{m+1}$ can only have the same or more positive entries as
  $K^m$, so in our case irreducible is equivalent to the existence of
  $m\in\N$ such that all entries of $K^m$ are positive, i.e.  to
  primitivity of the matrix $K$. \end{proof}

It appears for finite simple nonabelian groups that $K$ is irreducible
for every nontrivial conjugacy class $\CC$ not consisting of
involutions. This is surmised by looking at finite simple groups up to
order 75,000. The only observed reducible cases are the classes of
involutions fo $G=PSL(2,2^k)$, $G=PSU(3,2^k)$ or $G = Suz(2^{2k-1})$
for $k\geq 2$ up to the order that we could check. These are all
groups of Lie type over finite fields of characteristic
2. $S_4$ does have a noninvolutive reducible class (the 4-cycles) but for
$S_n$, $n>4$ we have checked by computer up to $n=8$
 that  the conjugacy classes with reducible $K$ are
  precisely the ${n-1\over 2}$-fold $2$-cycles for $n$ odd, so all are involutive. In this
  case the maximal eigenvalue has eigenspace decomposition $1\oplus
  (n-1)$, where $(n-1)$ means the standard representation.

\begin{lemma}\label{Cmunu} Let $G$ be a finite group and $\CGe$ a
  conjugacy class. Then $\mu,\nu,\rho$ in \eqref{munu} are defined and 
  \[  \mu={d\over
    \lambda_{max}}={1\over\<K\>},\quad \nu=\chi\mu,\quad
  \rho=\chi^2\mu,\quad 0<\lambda_{max}\le d^2,\quad  1\le \chi\le d\]
   where $\<K\>$ denotes the average entry of $K$,
  $d=|\CC|$ and $\chi=\chi_\CC(\CC)$ is the constant value of
  $\lambda^*(a)$ on $a\in \CC$. The upper bound for $\lambda_{max}$ holds iff $K$ has all entries $d$. 
\end{lemma}
\begin{proof} By Proposition~\ref{thetared} we know that $\theta$  is in the image of $K$ and that $\mu=\theta \cdot K^{-1}
  \theta={\theta\cdot\theta\over \lambda_{max}}={d\over
    \lambda_{max}}$. Since $\lambda=\chi\theta$ we then have $\nu,\rho$ as stated. Also since $\lambda_{max}$ is the column sum of $K$ it is clear that  $\lambda_{max}/d=\<K\>$. This is strictly positive since all entries are non-negative and $K(a,a)\ge 1$ for all $a\in \CC$.  The upper bound for $\lambda_{max}/d$ is saturated when $\<K\>=d$ which means every entry is $d$ as this is also the maximum of any entry.   \end{proof}

The upper bound for $\chi$ is reached precisely when all elements 
of $\CC$ mutually commute, which again
implies that all entries of $K$ are $d$, so apart from this case both upper bounds in the lemma are not reached. If the conjugacy class is real then $\lambda_{max}\ge d$ since for every $a\in \CC$ there exists $b\in \CC$ with $K(a,b)=d$. Meanwhile, $\chi\ge 2$ if the conjugacy class is real and not one of involutions. 

As regards nondegeneracy, we know from computer verification that all
finite simple nonabelian groups at least to order 75,000 and with real
conjugacy classes have nondegenerate $K$. In another direction we have
the following result:

\begin{proposition}\label{KCdisj} Let $G_1,\CC_1$ and $G_2,\CC_2$ be
  two finite groups with nontrivial conjugacy classes and $K_1,K_2$
  nondegenerate. Then $K_{\CC_1\sqcup\CC_2}$ is nondegenerate if and
  only if
  \begin{equation}\label{e:inequality} (\chi_1+\chi_2)^2\ne (\<K_1\>+d_2)(\<K_2\>+d_1)
  \end{equation} 
  where
  $\chi_i=\chi_{\CC_i}(\CC_i)$ and $d_i=|\CC_i|$.  Sufficient
  conditions for this are any of
  \begin{enumerate}
  \item $\chi_i< \<K_i\>$, $i=1,2$
  \item $\<K_1\>\<K_2\>\notin\Z$
  \item ${\rm max}\{\chi_1,\chi_2\}\le {\rm min}\{d_1,d_2\}$
  \item ${\rm max}\{\chi_1^2,\chi_2^2\}\le {d_1 d_2\over 2}$
  \end{enumerate}
\end{proposition}
\begin{proof} In the case of a conjugacy class the formula in
  Proposition~\ref{Kdis} becomes
  \[ K_{\CC_1\sqcup\CC_2}=\begin{pmatrix} K_1& 0 \cr 0&
    K_2\end{pmatrix}+\begin{pmatrix}d_2\theta_1^*\tens\theta_1^* \ \ &
    (\chi_1+\chi_2)\theta_1^*\tens \theta_2^*\cr
    (\chi_1+\chi_2)\theta_2^*\tens\theta_1^*&
    d_1\theta_2^*\tens\theta_2^*\end{pmatrix}\] as a bilinear form
  (similarly as a matrix). The formulae in Lemma~\ref{Cmunu} mean that
  the determinant condition reduces now to the one stated.  The listed
  sufficient conditions are immediate.  For (2) note that multiplying
  out the right hand side of the inequality \eqref{e:inequality} gives cross terms
  $\lambda_{max,i}$ which are integers. \end{proof}

Here (1) has the merit of being properties of each group and conjugacy
class separately and such groups and classes can be direct producted
with the direct sum `Lie algebra'. But it is not effective for simple
groups in the tables in the Appendix. Nevertheless we have the following Corollary. 

\begin{corollary} At least for finite simple nonabelian groups up to
  order 75,000 i.e. with reference to the tables in the Appendix,
  their direct product with the disjoint union of real conjugacy
  classes gives a nondegenerate Killing form.
\end{corollary}
\begin{proof} We apply test (4) in the preceding
  Proposition~\ref{KCdisj}. The largest value of
  $2\chi_\CC(\CC)^2/|\CC|$ in the tables is for the 2A class of $A_8$
  at about 11.9, when $\chi_\CC(\CC)$ is 25. This 11.9 is less than
  the smallest value of $|\CC|$ anywhere else in the tables, as the
  smallest size of a conjugacy class happens for classes $5A$ and $5B$
  in $A_5$, both with size 12. Moreover, any classes with
  $\chi_\CC(\CC)>25$ so as increase the left hand side have a much
  larger $|\CC|$ so that (4) still holds.
\end{proof}

\section{The Killing form and conjugation representations for
  $S_n$}\label{Sn}

Although Conjecture~\ref{con} and other points of discussion have been
for simple groups, the symmetric
groups are sufficiently close that we expect much of the discussion to apply to them
as well. Our main result, Proposition~\ref{KSn}, is for
$S_n$ with its 2-cycles class, namely an explicit decomposition of $\C \CC$
into irreducible representations in a manner compatible with the
eigenspace decomposition under $K$, and with explicit formulae for
the eigenvalues. In particular, we show that the Killing form
matrix $K$ for this conjugacy class is  nondegenerate. In this case
 it is necessarily positive definite by  Proposition~\ref{sig}. At the other extreme
 we find the maximal eigenvalue $\lambda_{max}$ for the $n$-cycles conjugacy class
 when $n$ is an odd prime.
 
First we note that in the case of $S_n$ for $n>4$, with the 2-cycles
conjugacy class, one can see from the formulae for the Killing form in
\cite{Ma:perm} that $K$ itself has all entries strictly
positive. Hence  Proposition~\ref{thetared} applies in this case
and there is a unique maximal eigenvalue, with eigenspace spanned by
$\theta$. For $S_3$ and $S_4$, $K$ is reducible, and $\theta$ is a
maximal eigenvector but each eigenvalue has multiplicity 3. 

We will need a concrete construction of irreducible subrepresentations
inside a conjugation representation.  For any partition
$\mu=(\mu_1,\dotsc, \mu_k)$ of $n$ we have a corresponding conjugacy
class $\mathcal C_\mu$ in $S_n$, namely the one with cycle type
$\mu$. Explicitly $\mathcal C_\mu$ is the conjugacy class containing
the element
\begin{equation}\label{e:amu}
a_{\mu}=(1,\,\dotsc\, ,\mu_1)(\mu_1+1,\,\dotsc\, ,\mu_1+\mu_2)\dotsc (
n-\mu_{k}+1,\,\dotsc\, ,n).
\end{equation}
If we let $Z_{a_\mu}$ denote the centraliser of $a_\mu$ and identify $
S_n/Z_{a_\mu}\cong \mathcal C_\mu$ via $\sigma Z_{a_\mu}\mapsto \sigma
a_\mu \sigma^{-1}$, then we obtain a $S_n$-equivariant homomorphism
from the left regular representation to the conjugation
representation,
\begin{equation}\label{e:pi}
\pi :\C S_n \longrightarrow \C (S_n/Z_{a_\mu})\cong \C \mathcal C_\mu,
\end{equation}
coming from the linear extension of the quotient map $S_n\to
S_n/Z_{a_\mu}$. If we interpret $\C S_n$ as the group algebra, then the map $\pi$
 becomes the action of $\C S_n$ on the element $a_\mu\in \C \mathcal C_\mu$. 
 The map $\pi$ is surjective reflecting $\C \mathcal C_\mu$ being a cyclic 
  $\C S_n$ module.

For the symmetric groups the irreducible representations are
very well known \cite{Fulton, Sagan, Fulton-Harris}, and  we have a concrete 
decomposition of $\C S_n$ into irreducibles
at our disposal.
Namely, recall that irreducible representations $S^\lambda$ of $S_n$ are indexed by partitions 
$\lambda \vdash n$, and a partition is represented by its {\it Young diagram} or {\it
  shape}.
Since
$S^\lambda$ occurs in $\C S_n$ with multiplicity
equal to $\dim S^\lambda$, the construction of a subrepresentation
of $\C S_n$ isomorphic to $S^\lambda$ for given $\lambda$
must naturally depend on an additional choice, so choose a {\it
  tableau} of shape $\lambda$, a one-to-one labelling of the boxes by
the integers $\{1,\dotsc, n\}$. The symmetric group $S_n$
acts on the set of tableaux by permuting the entries, and 
therefore a tableau $T$ defines a subgroup $R(T)$ of permutations
preserving the row sets, and a subgroup $C(T)$ of permutations preserving
the column sets.
The corresponding irreducible summand in $\C S_n$ is the submodule 
$S^T:=\C S_n \, c_T$, which is generated by the 
`Young symmetrizer'  $c_T=b_T a_T $ of $T$,  where
$$
a_T=\sum_{\sigma\in R(T)}\sigma, \qquad b_T=\sum_{\sigma\in
  C(T)}{\epsilon(\sigma)} \sigma,
$$
with $\epsilon(\sigma)$ the sign of the permutation $\sigma$.


 Clearly the right action of $S_n$ on $\C S_n$ provides
$S_n$-equivariant isomorphisms between the modules $S^T$ for varying
$T$ making them all equivalent. Note that there are many more tableaux than the multiplicity of
$S^\lambda$.  Let $SYT(\lambda)$ denote
the set of {\it standard Young tableaux}, that is tableaux whose
entries are strictly increasing in rows and in columns. Then the isotypic component
of $S^\lambda$ inside $\C S_n$ is precisely the subspace
$$
\bigoplus_{T\in SYT(\lambda)} S^T.
$$
It is now straightforward to find the
irreducible summands of $\C \CC_\mu$  using Young symmetrizers, 
as follows.

\begin{lemma}\label{l:Spechtembeddings}
  Suppose $\lambda$ and $\mu$ are partitions of $n$ and all notations are as above.

  The Specht module $S^\lambda$ occurs as a subrepresentation of the
  conjugation representation $\C \mathcal C_\mu$ if and only if there
  exists a standard Young tableau $T$ of shape $\lambda$ for which
  $c_T\cdot a_{\mu}\ne 0$ in $\C \mathcal C_\mu$.

  In that case, the subrepresentation is explicitly realized as the
  subspace $\pi(S^T)$, where $\pi$ is the projection map from
  \eqref{e:pi}.
\end{lemma}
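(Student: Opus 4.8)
The plan is to exploit the $S_n$-equivariance of the projection $\pi$ together with Schur's lemma and the block decomposition of $\C S_n$. The first observation I would record is that the quantity $c_T\cdot a_\mu$ appearing in the statement---where $c_T\in\C S_n$ acts by conjugation on the basis element $a_\mu\in\C\CC_\mu$---is nothing but $\pi(c_T)$. Indeed, for $\sigma\in S_n$ one has $\sigma\cdot a_\mu=\sigma a_\mu\sigma^{-1}=\pi(\sigma)$, and extending linearly gives $c_T\cdot a_\mu=\pi(c_T)$. More generally, since $\pi$ intertwines the left-regular action on $\C S_n$ with the conjugation action on $\C\CC_\mu$, it carries the left ideal $S^T=\C S_n\,c_T$ onto $\pi(S^T)=\C S_n\cdot\pi(c_T)$, the subrepresentation of $\C\CC_\mu$ generated under conjugation by $\pi(c_T)$.

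For the easy direction ($\Leftarrow$), suppose some standard $T$ of shape $\lambda$ has $c_T\cdot a_\mu=\pi(c_T)\ne 0$. Since $S^T\cong S^\lambda$ is irreducible and $\pi|_{S^T}$ is an equivariant map sending $c_T\in S^T$ to the nonzero vector $\pi(c_T)$, its kernel is a proper subrepresentation of $S^T$, hence zero by irreducibility. Thus $\pi|_{S^T}$ is injective and $\pi(S^T)\cong S^\lambda$ is a copy of the Specht module inside $\C\CC_\mu$; this simultaneously proves that $S^\lambda$ occurs and that it is realized as the subspace $\pi(S^T)$, giving the final clause of the lemma.

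For the converse ($\Rightarrow$) I would pass to the block $A_\lambda=\bigoplus_{T\in SYT(\lambda)}S^T$ of $S^\lambda$ in $\C S_n$ and argue that $\pi(A_\lambda)$ is exactly the $S^\lambda$-isotypical component $M_\lambda$ of $\C\CC_\mu$. This follows from the block decomposition $\C S_n=\bigoplus_\nu A_\nu$: equivariance forces $\pi(A_\nu)\subseteq M_\nu$, surjectivity of $\pi$ gives $\C\CC_\mu=\sum_\nu\pi(A_\nu)$, and since the isotypical components $M_\nu$ are independent this forces $\pi(A_\nu)=M_\nu$ for every $\nu$. Now if $S^\lambda$ occurs in $\C\CC_\mu$ then $M_\lambda=\pi(A_\lambda)\ne 0$, so at least one summand $\pi(S^T)=\C S_n\cdot\pi(c_T)$ is nonzero, which forces $\pi(c_T)=c_T\cdot a_\mu\ne 0$ for that standard $T$.

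The steps are all elementary representation theory once the translation $c_T\cdot a_\mu=\pi(c_T)$ is in hand, so there is no single deep obstacle. The point requiring the most care is the converse: one must rule out the a priori possibility that $S^\lambda$ occurs in $\C\CC_\mu$ while $\pi(c_T)=0$ for every standard tableau $T$ of shape $\lambda$. This is precisely what the identity $\pi(A_\lambda)=M_\lambda$ excludes, and establishing that identity cleanly depends on combining equivariance (to keep distinct isotypical components separated) with the surjectivity of $\pi$ (to fill out each component).
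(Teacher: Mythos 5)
Your proposal is correct and takes essentially the same approach as the paper: the easy direction via Schur's lemma applied to the nonzero equivariant map $\pi|_{S^T}$, and the converse via the block decomposition of $\C S_n$ combined with equivariance and surjectivity of $\pi$. The only cosmetic difference is that you argue the converse directly by identifying $\pi(A_\lambda)$ with the isotypical component of $S^\lambda$ in $\C\mathcal{C}_\mu$, whereas the paper argues contrapositively that if all $c_T\cdot a_\mu=0$ then the entire block lies in $\ker\pi$ and hence $S^\lambda$ cannot occur in the image; the underlying mechanism is identical.
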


\begin{proof}
  If there is a tableau $T$ for which $c_T\cdot a_{\mu}\ne 0$, then
  the restriction of the map $\pi$ from \eqref{e:pi} to the
  subrepresentation $S^T$ of $\C S_n$ defines a nonzero
  $S_n$-equivariant map $S^T\to \mathcal C_{\mu}$. Since $S^T$ is
  irreducible and isomorphic to $S^\lambda$ it follows that this map
  must be an isomorphism onto its image.

  On the other hand, if $c_T\cdot a_{\mu}= 0$ for all $T\in
  SYT(\lambda)$, then the entire block of $S^\lambda$ in $\C S_n$ lies
  in the kernel of $\pi$, and therefore the irreducible representation
  $S^\lambda$ does not occur in the image of $\pi$.  Since $\pi$ is
  surjective, this means that $S^\lambda$ is not a subrepresentation
  of $\C \mathcal C_\mu$.
\end{proof}

We remark that this lemma also holds, of course, with $\C \CC_\mu$
replaced by any cyclic $\C S_n$-module.

\subsection{$S_n$ with the 2-cycles class}

  In the example of $S_3$, the $2$-cycles class $\CC_{(2,1)}$
has three elements and it is straightforward to see that the
conjugation representation, $\C \CC_{(2,1)}$, is the (defining)
three-dimensional permutation representation of $S_3$.  In terms of
Specht modules this representation decomposes as
\begin{equation} \label{e:2cyclesS3} \C \CC_{(2,1)}=S^{(3)}\oplus
  S^{(2,1)}.
 \end{equation}
 That is, the trivial representation plus the standard $2$-dimensional
 representation.  The general case is not much different. We will use the notation $(2,1^{n-2})$ for the partition
 $(2,1,\overset{(n-2)}{\dotsc}, 1)$ which represents the $2$-cycles
 class in $S_n$.

 \begin{proposition}\label{KSn} Consider $S_n$ for $n>2$ with the 2-cycles class
   $\CC=\CC_{(2,1^{n-2})}$. For $n=3$ the decomposition of $\C \CC$
   into irreducibles is given in equation \eqref{e:2cyclesS3}.
   \begin{enumerate}
   \item For $n>3$ the decomposition of the conjugation representation
     $\C \CC$ into irreducible representations is given by
     \[
     \C \CC\cong S^{(n)} \oplus S^{(n-1,1)}\oplus S^{(n-2,2)}.
     \]
     Here the first two Specht modules $S^{(n)},S^{(n-1,1)}$ are the
     trivial representation and the standard $(n-1)$-dimensional
     representation, respectively.
   \item Each irreducible submodule of $\C \CC$ lies in an eigenspace
     for the Killing form matrix $K$ with eigenvalues as follows.  The
     eigenvalue of $K$ for the eigenspace containing $S^{(n)}$
     (spanned by the element $\theta$) is
     \[ {1\over 4}(n^4-10 n^3+41 n^2-72 n+48).
     \]
     The eigenvalue of $K$ in the eigenspace containing $S^{(n-1,1)}$
     is
     \[ n^2-6n+12.
     \]
     Suppose $n>3$.  Then the eigenvalue of $K$ on the eigenspace
     containing $S^{(n-2,2)}$ is $2n$.
   \end{enumerate}
 \end{proposition}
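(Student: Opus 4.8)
The plan is to split the argument exactly as the statement does: first decompose $\C\CC$ as an $S_n$-module, and then exploit multiplicity-freeness of that decomposition to read off the eigenvalues of $K$ one summand at a time.

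For part (1) I would bypass Lemma~\ref{l:Spechtembeddings} and instead observe that conjugation sends a transposition $(ij)$ to $(\sigma(i)\,\sigma(j))$, so the conjugation action of $S_n$ on $\CC$ is identical to the permutation action on unordered pairs $\{i,j\}$. Hence $\C\CC$ is the Young permutation module $M^{(n-2,2)}$ induced from the trivial representation of the Young subgroup $S_{n-2}\times S_2$. By Young's rule the multiplicity of $S^\lambda$ in $M^{(n-2,2)}$ is the Kostka number counting semistandard tableaux of shape $\lambda$ and content $(n-2,2)$, and a one-line count shows this equals $1$ for $\lambda\in\{(n),(n-1,1),(n-2,2)\}$ and $0$ otherwise when $n\ge 4$. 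This gives the claimed multiplicity-free decomposition $\C\CC\cong S^{(n)}\oplus S^{(n-1,1)}\oplus S^{(n-2,2)}$, while the degenerate case $n=3$, where $(n-2,2)=(1,2)$ is not a partition, is the one already recorded in \eqref{e:2cyclesS3}.

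For part (2) the structural point is that, the decomposition being multiplicity-free and $K$ being ad-invariant, Schur's lemma forces $K$ to act as a scalar on each of the three summands; it therefore suffices to evaluate those three scalars. I would compute the entries $K_{ab}=|Z(ab)\cap\CC|$ by a short centraliser count in the three possible relative positions of transpositions $a,b$: if $a=b$ then $ab=e$ and the entry is $|\CC|=\binom{n}{2}$; if $a,b$ meet in one point then $ab$ is a $3$-cycle, whose commuting transpositions are exactly those supported on the remaining $n-3$ points, giving $\binom{n-3}{2}$; and if $a,b$ are disjoint then $ab$ has cycle type $(2,2,1^{n-4})$, whose commuting transpositions are the two factors together with those supported on the remaining $n-4$ points, giving $2+\binom{n-4}{2}$. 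Writing $A_1,A_2$ for the adjacency operators of the Johnson scheme $J(n,2)$ corresponding to ``meeting in one point'' and ``being disjoint'', this reads
\[
K=\binom{n}{2}\,I+\binom{n-3}{2}\,A_1+\left(2+\binom{n-4}{2}\right)A_2 .
\]

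Finally I would invoke the classical spectral theory of $J(n,2)$: the common eigenspaces of $A_1,A_2$ are precisely $S^{(n)},S^{(n-1,1)},S^{(n-2,2)}$, the eigenvalue of $A_1$ on $S^{(n-i,i)}$ is $(2-i)(n-2-i)-i$ for $i=0,1,2$, and the eigenvalues of $A_2$ follow at once from $I+A_1+A_2=J$ (so $A_2$ acts as $-I-A_1$ on the two nontrivial components). Substituting these into the displayed combination and simplifying yields the three stated values; for the trivial component this is just the common row sum, an instance of Proposition~\ref{theta}. The main obstacle is purely computational rather than conceptual: it lies in getting the three centraliser counts right (particularly the $3$-cycle and $(2,2)$-type cases) and in the somewhat lengthy algebra needed to collapse the linear combination into the quartic $\tfrac14(n^4-10n^3+41n^2-72n+48)$ and the two quadratics $n^2-6n+12$ and $2n$; the identification of eigenspaces with irreps, which might otherwise be delicate, is supplied for free by multiplicity-freeness once part (1) is established.
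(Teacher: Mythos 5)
Your proposal is correct, and I verified the arithmetic: your centraliser counts $\binom{n}{2}$, $\binom{n-3}{2}$, $2+\binom{n-4}{2}$ agree with the entry values of $K$ used in the paper, and substituting the Johnson-scheme eigenvalues $2(n-2),\,n-4,\,-2$ for $A_1$ (and $\binom{n-2}{2},\,3-n,\,1$ for $A_2$) into your expression for $K$ does yield $\tfrac14(n^4-10n^3+41n^2-72n+48)$, $n^2-6n+12$ and $2n$ as claimed. However, your route is genuinely different from the paper's. For part (1) the paper does not use Young's rule: it constructs explicit copies of $S^{(n-1,1)}$ and $S^{(n-2,2)}$ inside $\C\CC$ via Lemma~\ref{l:Spechtembeddings}, i.e.\ by applying Young symmetrizers to the class element to get concrete vectors $v_{T_1}=(12)+(13)+\cdots+(1,n-1)-(2,n)-\cdots-(n-1,n)$ and $v_{T_2}=(12)-(2,n-1)-(1,n)+(n-1,n)$, and then closes the argument by a dimension count. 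For part (2) the paper then evaluates each eigenvalue directly as the coefficient of $(12)$ in $K(v_{T_i})$, rather than diagonalising $K$ inside the Bose--Mesner algebra of $J(n,2)$. Your approach is shorter and more conceptual: identifying $\C\CC$ with the Young permutation module $M^{(n-2,2)}$ makes multiplicity-freeness and the spectrum immediate from classical association-scheme theory, with no need to check nonvanishing of symmetrizer images. What the paper's method buys in exchange is explicit eigenvectors and a technique that generalises: Lemma~\ref{l:Spechtembeddings} is reused later for other conjugacy classes (Example~5.5 for $S_4$, and the sign-representation analysis in Proposition~\ref{p:signrep}), where the conjugation module is \emph{not} a Young permutation module and Young's rule gives no purchase; your identification of conjugation on transpositions with the action on $2$-subsets is special to this class.
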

 \begin{proof}
   Part (1) could be checked using character theory. But we will
   rather define explicit embeddings of the Specht modules, by the
   method of Lemma~\ref{l:Spechtembeddings}, in order to be able to
   compute the eigenvalues of $K$ in the later parts of the proof.

   Of course the trivial representation embeds into $\C \CC_{(2,
     1^{n-2})} $ as the subspace spanned by the element
   $\theta=\sum_{a\in\CC}a$, and has multiplicity $1$.

   For the standard representation $S^{(n-1,1)}$ we consider the
   subspace $\pi(S^{T_1})$ of $\C \CC_{(2, 1^{n-2})}$ for $\pi$ from
   \eqref{e:pi} corresponding to the tableau
   \[
   T_1 =
   \begin{ytableau}
     1 & 2 & 3 & \dotsm & \scriptstyle n-1 \\
     n
   \end{ytableau}
   \]

   This is the submodule of $\C \CC$ obtained by applying $ \C S_n$ to
   the vector $c_{T_1}\cdot (12)$. Up to an overall multiple, which we
   drop, this vector works out to be
   $$
   v_{T_1}=(12)+(13)+\cdots+(1,n-1)-(2,n)-(3,n)-\cdots-(n-1,n).
   $$
   Since $v_{T_1}\ne 0$ we have found a copy of $S^{(n-1,1)}$ in $\C
   \CC$.

   For the next representation $S^{(n-2,2)}$ we consider the subspace
   $\pi(S^{T_2})$ of $\C \CC$ for $\pi$ from \eqref{e:pi} and the
   tableau
   \[
   T_2 =
   \begin{ytableau}
     1 & 2 & 3 & \dotsm & \scriptstyle n-2 \\
     \scriptstyle n-1 & n
   \end{ytableau}
   \]
   This is the submodule of $\C \CC$ obtained by applying $ \C S_n$ to
   the vector $c_{T_2}\cdot (12)$. Up to an overall multiple this
   vector works out to be
   \[
   v_{T_2}=(12)-(2,n-1)-(1,n)+(n-1,n),
   \]
   and since $v_{T_2}\ne 0$ we have found a copy of $S^{(n-2,2)}$ in
   $\C \CC$.

   That we have thereby completely decomposed $\C \CC$ follows by
   dimension count:
   \[
   \dim{S^{(n)}}+\dim{ S^{(n-1,1)}} +\dim{S^{(n-2,2)}}= 1+(n-1)+
   \frac{n(n-3)}2 ={n\choose 2}=\dim{\C \CC},
   \]
   where $\dim(S^{(n-2,2)})$ is computed for example by the hook
   formula. 
    This concludes the proof of (1).

   That the irreducible subrepresentations lie in eigenspaces of $K$
   follows immediately from the fact that in the decomposition of
   $\C\CC$ each irreducible representation occurs with multiplicity at
   most one.  We can now compute the eigenvalues.

   For the trivial representation we compute the column sum $\sum_a
   K((12),a)$ over all 2-cycles. In the basis of the `triangular'
   listing
   \[ (12)\]
   \[(13),(23)\]
   \[(14),(24),(34)\]
   \[ (15),(25),(35),(45)\]
   \[ \vdots\quad\vdots\]
   \[(1n),(2n),(3n),(4n),\cdots ,(n-1,n)\] we have for $a$ the choice
   (12), or $a$ lies in the size $2(n-3)$ region on the left where $a$
   has one entry in common with (12), or $a$ lies in the triangle to
   the right of size $(n-2)(n-3)/2$ where $a$ is disjoint from
   (12). Using the values of $K$ for these three cases in
   \cite{Ma:perm}, we find
   \[ {n\choose 2}+2(n-2){n-3\choose 2}+{(n-2)(n-3)\over
     2}\left({n-4\choose 2}+2\right)\] which computes as stated.

   For the standard representation we use the vector we constructed in
   the proof of (1),
   \[ v_{T_1}=(12)+(13)+\cdots+(1,n-1)-(2,n)-(3,n)-\cdots-(n-1,n),\]
   which involves the left and bottom slopes of the triangle leaving
   out the common vertex.  Then the eigenvalue computed as the
   coefficient of $(12)$ in $K(v_{T_1})$ is
   \[
   K((12),(12))+(n-3)K((12),(13))-K((12),(2,n))-(n-3)K((12),(3,n))\]
   \[ = {n\choose 2}+(n-4){n-3\choose 2}+(n-3)\left({n-4\choose
       2}+2\right)\] which comes out as stated. Both formulae,
   although computed for $n>4$ in the above counting, also give the
   right answer for $n=2,3,4$, as computed by hand.
   
   For the representation $S^{(n-2,2)}$ we use the vector
   \[ v_{T_2}=(12)-(2,n-1)-(1,n)+(n-1,n)\] from the proof of (1) and
   compute the eigenvalue as the $(12)$ coefficient of $K(v_{T_2})$,
   i.e. as
   \[ K((12),(12))-K((12),(2,n-1))-K((12),(1,n))+K((12),(n-1,n))\]
   \[ = {n\choose 2}-2{n-3\choose 2}+{n-4\choose 2}+2=2n.\]
 \end{proof}

 \begin{corollary}\label{Sn2cy} The Killing form for $S_n$, $n>2$ with
   the 2-cycles conjugacy class $\CC$ is non-degenerate and in fact
   positive definite. Moreover the decomposition of $\C \CC$ into
   irreps consisting of the trivial and the standard representation,
   and the representation $S^{(n-2,2)}$, coincides for $n>6$ with the
   decomposition of $K$ into eigenspaces of respectively the maximal,
   next to maximal and smallest eigenvalues.
 \end{corollary}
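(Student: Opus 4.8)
The plan is to let the preceding proposition do the heavy lifting: it already exhibits $\C\CC\cong S^{(n)}\oplus S^{(n-1,1)}\oplus S^{(n-2,2)}$ (for $n>3$; for $n=3$ only the first two summands appear), with each irreducible occurring with multiplicity one and hence sitting inside a single eigenspace of $K$, and it records the three eigenvalues explicitly. Since $K$ is real and symmetric in the basis $\CC$ it is diagonalizable, so these three numbers, with multiplicities equal to the respective dimensions, are its \emph{only} eigenvalues. Thus the entire corollary reduces to elementary facts about the three scalars
\[ \mu_0={1\over 4}(n^4-10n^3+41n^2-72n+48),\quad \mu_1=n^2-6n+12,\quad \mu_2=2n,\]
namely positivity (for positive definiteness, which of course subsumes nondegeneracy) and strict ordering (for the eigenspace identification).

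First I would establish positive definiteness. The eigenvalue $\mu_2=2n$ is manifestly positive, and $\mu_1=(n-3)^2+3>0$. For $\mu_0$ I would invoke Proposition~\ref{theta}: since $\CC$ is a conjugacy class, $\mu_0$ is the common column sum of $K$, and as $K$ has non-negative integer entries with diagonal $K(x_a,x_a)=|Z(a^2)\cap\CC|={n\choose 2}>0$ (because $a^2=e$ for a $2$-cycle), we get $\mu_0\geq{n\choose 2}>0$. For $n=3$ one checks directly that both eigenvalues equal $3$. As these exhaust the spectrum, $K$ is positive definite for every $n>2$.

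Next I would pin down the ordering for $n>6$. For the gap between the standard and the $S^{(n-2,2)}$ eigenvalue, $\mu_1-\mu_2=n^2-8n+12=(n-2)(n-6)$, which is strictly positive exactly when $n>6$. For the gap between the trivial and standard eigenvalue I would compute
\[ 4(\mu_0-\mu_1)=n^4-10n^3+37n^2-48n=n(n-3)(n^2-7n+16),\]
and observe that $n^2-7n+16$ has negative discriminant and so is always positive, whence $\mu_0>\mu_1$ for all $n>3$. Combining, $\mu_0>\mu_1>\mu_2$ strictly for $n>6$, so $K$ has three distinct eigenvalues there. Finally, since each of the three non-isomorphic irreducibles lies in a single eigenspace and the three eigenvalues are distinct, a dimension count forces each eigenspace to coincide with exactly one summand: the maximal eigenvalue $\mu_0$ with $S^{(n)}$, the next-to-maximal $\mu_1$ with $S^{(n-1,1)}$, and the smallest $\mu_2$ with $S^{(n-2,2)}$.

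The computations here are entirely routine; the only genuinely delicate point is the bookkeeping at small $n$, where the eigenvalues collide or swap order (for $n=3$ all coincide, for $n\in\{4,6\}$ two of the three coincide, and at $n=5$ the standard and $S^{(n-2,2)}$ eigenvalues are in the opposite order to the large-$n$ pattern), which is precisely why the clean identification of eigenspaces with named irreducibles is asserted only for $n>6$.
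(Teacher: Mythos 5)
Your proposal is correct and takes essentially the same route as the paper: both read off the three eigenvalues from the preceding proposition, deduce positive definiteness from their positivity, and obtain the eigenspace identification for $n>6$ by ordering them as polynomials in $n$ (noting the coincidences at $n=3,4,6$ and the reversal at $n=5$). The only difference is that you make explicit, via the factorizations $\mu_1-\mu_2=(n-2)(n-6)$ and $4(\mu_0-\mu_1)=n(n-3)(n^2-7n+16)$ and the column-sum bound for $\mu_0$, what the paper leaves to ``inspection.''
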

 \begin{proof} Looking at the three expressions for the eigenvalues in
   the lemmas above it is evident that they have different leading
   powers of $n$ and hence are distinct for all $n$ bigger than some
   value. By inspection, the only degeneracies are $n=3$ when the
   trivial and the standard representation have the same eigenvalue of
   $K$, $n=4$ when the eigenvalues of the trivial and the
   $S^{(n-2,2)}$ coincide, being smaller than the eigenvalue of the
   standard representation, and $n=6$ when the eigenvalues of the
   standard representation and of $S^{(n-2,2)}$ coincide. After that,
   the eigenvalue of the trivial exceeds that of the standard
   representation which exceeds that of $S^{(n-2,2)}$ as stated. As
   all the eigenvalues are positive we conclude that $K$ is
   non-degenerate (and positive definite when extended as a hermitian
   inner product). \end{proof}

\subsection{$S_n$ with the $n$-cycles class}\label{signrep}

In this section $\CC$ is the class of $n$-cycles and our first result is a formula for the eigenvalue $\lambda_{max}$ of the
Killing form, whose eigenspace the trivial representation in $\C \CC$ for $n$ prime, using a result of 
Zagier\cite{Zagier}.

\begin{proposition}\label{p:lmax} Let $n$ be an odd prime. The maximal eigenvalue  of the Killing form on $S_n$ with
its $n$-cycles class is $\lambda_{max}= \frac{(n-1)!}{n+1}(3n-1)$. 
\end{proposition}
\proof Suppose $a$ and $b$ are $n$-cycles for which
the product $ab$ is an $n$-cycle. The centraliser of $ab$ consists in this case of all the powers of $ab$. Since $n$ is prime, these powers are all $n$-cycles except for the $n$-th power which is $e$. So in this case $K_{a,b}=|Z(ab)\cap \CC|=n-1$. 
 If $ab$ is not an $n$-cycle or the identity, it cannot commute with an $n$-cycle, and hence $K_{a,b}=0$ in that case. 
Finally in the case where $ab=e$ we have $K_{a,b}=| \CC|=(n-1)!$.  
By a result of Zagier's, \cite{Zagier}, it is known that for each fixed $n$-cycle $a$, there are $\frac{2(n-1)!}{n+1}$ many $n$-cycles $b$ such that $ab$ is again an $n$-cylce. 
Hence the eigenvalue $\lambda_{max}$ which is the row sum of $K$ is $\frac{2(n-1)! }{n+1}(n-1)+(n-1)!$. This simplifies to the formula in the proposition. \endproof

Next we look at the sign representation. As a small digression
we first establish precisely which conjugacy class this occurs in. In particular 
it occurs in the $n$-cycles class precisely when $n$ is odd, and we will
conjecture a generalisation of the Proposition~\ref{p:lmax}. 

Recall that the overall multiplicity of the sign
representation in the conjugation representation $\C S_n$ is easily found
by character theory as precisely the number of
conjugacy classes consisting of even permutations minus the number of
conjugacy classes of odd permutations (the row sum in the character
table, for the sign representation). If $s(n)$ denotes the
multiplicity of the sign representation in $\C S_n$, then the above
description of $s(n)$ implies the product formula 
\begin{equation}\label{e:prod1}
1+t+\sum_{n=2}^{\infty}s(n)t^n=\prod_{k=1}^{\infty}\left(\frac{1}{1+(-t)^k}\right).
\end{equation}

By a classical Euler identity which reads (after replacing the usual
variable by $-t$ and inverting),
\begin{equation}\label{e:prod2}
\prod_{k=1}^{\infty}\left(\frac{1}{1+(-t)^k}\right)=\prod_{k=1}^{\infty}(1+t^{2k-1}),
\end{equation}
it follows that the multiplicity of the sign representation in the
conjugation representation $\C S_n$ is equal to the number of
partitions of $n$ into distinct odd parts. The following is surely also known but
we have not found a reference and include it here. 

\begin{proposition}\label{p:signrep} The sign representation of $S_n$ appears as a subrepresentation of 
  the conjugation representation $\C\CC_\mu$ if and only if $\mu$ is a
  partition of $n$ into distinct odd parts. If it appears in
  $\C\CC_\mu$, then it has multiplicity one.
\end{proposition}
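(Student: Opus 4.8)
The plan is to apply Lemma~\ref{l:Spechtembeddings} to $\lambda=(1^n)$, whose Specht module $S^{(1^n)}$ is the sign representation, and thereby reduce the whole question to a sign computation on the centraliser $Z_{a_\mu}$. The shape $(1^n)$ has a single (standard) tableau $T$, the column $1,2,\dots,n$; its row subgroup is trivial and its column subgroup is all of $S_n$, so the Young symmetriser collapses to the full antisymmetriser $c_T=\sum_{\sigma\in S_n}\mathrm{sgn}(\sigma)\,\sigma$. Pushing this through the projection $\pi$ of \eqref{e:pi}, I would compute
\[
c_T\cdot a_\mu=\sum_{\sigma\in S_n}\mathrm{sgn}(\sigma)\,\sigma a_\mu\sigma^{-1}\in\C\CC_\mu,
\]
and read off the coefficient of a fixed $b\in\CC_\mu$. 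Choosing $\sigma_0$ with $\sigma_0 a_\mu\sigma_0^{-1}=b$, the fibre over $b$ is the coset $\sigma_0 Z_{a_\mu}$, so that coefficient is $\mathrm{sgn}(\sigma_0)\sum_{\tau\in Z_{a_\mu}}\mathrm{sgn}(\tau)$. Hence $c_T\cdot a_\mu\neq 0$ exactly when the character sum $\sum_{\tau\in Z_{a_\mu}}\mathrm{sgn}(\tau)$ is nonzero; this sum equals $|Z_{a_\mu}|$ when $\mathrm{sgn}|_{Z_{a_\mu}}$ is trivial and $0$ otherwise, so the condition is $Z_{a_\mu}\subseteq A_n$. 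By Lemma~\ref{l:Spechtembeddings} this is the criterion for the sign representation to embed in $\C\CC_\mu$.

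The heart of the proof is then the combinatorial step of deciding when $Z_{a_\mu}\subseteq A_n$. Writing $m_i$ for the number of parts of $\mu$ equal to $i$, the centraliser factors as a direct product $\prod_i\bigl(C_i\wr S_{m_i}\bigr)$, in which each $C_i$ is generated by a single $i$-cycle and $S_{m_i}$ permutes the $m_i$ cycles of length $i$. Because $\mathrm{sgn}$ is a homomorphism, it is enough to evaluate it on generators: an $i$-cycle has sign $(-1)^{i-1}$, while an element interchanging two length-$i$ cycles is a product of $i$ transpositions and so has sign $(-1)^{i}$. A factor with $m_i=1$ contributes only the $i$-cycle, which is even precisely when $i$ is odd; a factor with $m_i\geq 2$ would require $(-1)^{i-1}=1$ and $(-1)^{i}=1$ simultaneously, which is impossible. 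Thus $Z_{a_\mu}\subseteq A_n$ if and only if every part of $\mu$ is odd and no part is repeated, i.e. $\mu$ is a partition of $n$ into distinct odd parts. Combined with the first paragraph, this gives the stated equivalence.

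For the multiplicity-one assertion I would pass to Frobenius reciprocity, which also reproves the criterion in one line. The conjugation representation on $\CC_\mu$ is the permutation module $\mathrm{Ind}_{Z_{a_\mu}}^{S_n}\mathbf 1$ on the cosets $S_n/Z_{a_\mu}$, so the multiplicity of the sign representation is $\langle \mathrm{sgn}|_{Z_{a_\mu}},\mathbf 1\rangle_{Z_{a_\mu}}$; this equals $1$ when $\mathrm{sgn}$ restricts trivially to $Z_{a_\mu}$ and $0$ otherwise. Hence whenever the sign representation appears it appears exactly once. The main obstacle throughout is the centraliser sign analysis of the second paragraph: one must correctly pin down the generators of each wreath factor $C_i\wr S_{m_i}$ and observe that a length-$i$ cycle and a swap of two such cycles can never both be even, which is exactly what forces the parts to be distinct once they are known to be odd.
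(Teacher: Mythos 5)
Your proof is correct, and its skeleton matches the paper's: both apply Lemma~\ref{l:Spechtembeddings} with the full antisymmetriser, reduce the question to whether $Z_{a_\mu}\subseteq A_n$, and settle that by the same centraliser combinatorics (even-length cycles, and swaps of two equal-length cycles, are the odd culprits forcing distinct odd parts). The differences lie in how the reduction and the multiplicity claim are handled. The paper splits the reduction into two asymmetric arguments: in one direction it uses equivariance --- if $v_\mu=\sum_\sigma\mathrm{sgn}(\sigma)\,\sigma\cdot a_\mu\neq 0$ then $v_\mu$ spans a copy of the sign representation, and $\tau\cdot v_\mu=v_\mu$ for $\tau\in Z_{a_\mu}$ forces $\tau$ even --- while for the converse it reads off only the coefficient of $a_\mu$, which equals $|Z_{a_\mu}|$. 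Your coset computation of \emph{all} coefficients, each of the form $\mathrm{sgn}(\sigma_0)\sum_{\tau\in Z_{a_\mu}}\mathrm{sgn}(\tau)$, handles both directions uniformly and is arguably cleaner. For multiplicity one, the paper argues from cyclicity: $\C\CC_\mu$ is a cyclic $\C S_n$-module and the sign representation occurs exactly once in the regular representation, so it occurs at most once in any quotient. Your Frobenius reciprocity argument --- identifying $\C\CC_\mu\cong\mathrm{Ind}_{Z_{a_\mu}}^{S_n}\mathbf{1}$ so that the multiplicity of sign is $\langle\mathrm{sgn}|_{Z_{a_\mu}},\mathbf{1}\rangle_{Z_{a_\mu}}$ --- is a genuinely different and more standard route; as you yourself note, it subsumes your first paragraph entirely, since it delivers both the criterion $Z_{a_\mu}\subseteq A_n$ and multiplicity one in a single line, making the Young-symmetriser computation logically redundant (though that computation has the virtue of producing the explicit spanning vector and of connecting to the machinery the paper uses for other Specht modules and for computing Killing form eigenvalues).
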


\begin{proof}
  Since the sign representation has multiplicity one in in the
  left-regular representation $\C S_n$ and the conjugation
  representation $\C\CC_\mu$ is a cyclic $\C S_n$-module, it is clear
  that the sign representation can have multiplicity at most $1$ in
  $\C\CC_\mu$.

  Let us now write $\sigma\cdot \sigma'=\sigma\sigma'\sigma^{-1}$ for
  the conjugation action.  Fix an element $a_\mu$ in the conjugacy
  class $\CC_\mu$.  By Lemma~\ref{l:Spechtembeddings}, the sign
  representation appears in $\C \CC_{\mu}$ if and only if the element
  $$
  v_\mu=\sum_{\sigma}{\epsilon(\sigma)} \sigma \cdot a_\mu
  $$
  in $\C \CC_{\mu}$ is nonzero. Moreover if it is nonzero then it
  spans the sign representation.  Now suppose $v_\mu$ is nonzero and
  let $\tau$ be an element of the centraliser $Z_{a_\mu}$.  Then we
  see that
  \begin{equation*}
    \tau \cdot v_\mu = \sum_{\sigma}{\epsilon(\sigma)}(
    \tau\sigma\tau^{-1}) \tau \cdot a_\mu
    =\sum_{\sigma}{\epsilon(\sigma)} \tau\sigma\tau^{-1} \cdot a_\mu=
    v_\mu.
  \end{equation*}
  This implies that $\tau$ is even, since $v_\mu$ spans the sign
  representation.  Therefore if the sign representation occurs in $\C
  \CC_\mu$ then $Z_{a_\mu}$ contains only even permutations.

  The converse is true as well. If all elements in $Z_\mu$ are even,
  then the coefficient of $a_\mu$ in $v_\mu$ comes out to be $| Z_\mu
  |$, implying that $v_\mu$ is nonzero, and the sign representation
  occurs in $\C C_\mu$.

  It remains to prove that $Z_{a_\mu}$ contains only even
  permutations, precisely if $\mu$ is a permutation of $n$ into
  distinct odd parts.

  Clearly, if $\mu$ has an even part then there is a cycle of even
  length in $a_\mu$, which gives an element of the centralizer that
  has odd parity. Also if $\mu$ has two parts of size $k$ (we may
  assume $k$ odd, by above), then there is an element of the
  centralizer which exchanges the corresponding two $k$-cycles of
  $a_\mu$, which is a product of $k$ many $2$-cycles. So again there
  is an element of odd parity in $Z_{a_\mu}$.  This shows that for the
  sign representation to occur inside $\C \CC_\mu$, we must have that
  $\mu$ is a partition of $n$ into distinct, odd parts.
  
  Conversely, if $\mu$ is a partition of $n$ into distinct odd parts,
  then the centraliser is generated by the individual cycles in
  $a_\mu$, and these are all even permutations.
\end{proof}

\begin{remark} 
  Another well-known partition identity gains a
  representation-theoretic interpretation in this context.  Namely the
  block decomposition of $\C S_n$ is also invariant under the
  conjugation representation, and it is easy to check using character
  theory that the sign representation occurs, and with multiplicity
  one, precisely in the blocks of Specht modules corresponding to
  transpose-symmetric partitions.  This gives another explanation of
  the fact that the number of transpose-symmetric partitions of $n$
  agrees with the number of partitions of $n$ into distinct odd parts
  (a fact which has an easy, not obviously related bijective proof) .
\end{remark}

Proposition~\ref{p:signrep} implies,
as mentioned before, that  the sign representation occurs in the class of $n$-cycles
iff $n$ is odd. In this case  we 
define the parity $\pi(a)$ of $a\in \CC$ to be  $\pi(a)=\epsilon(\sigma)$ where $\sigma$ is
any permutation for which
$\sigma a_\mu \sigma^{-1}=a$. Here $\pi$ is well-defined as any permutation that
commutes with an $n$-cycle has to be a power of an $n$-cycle and hence even, as $n$ is odd. We then let
\[ \delta_n=\#\{a\in \CC\ |\ a_\mu a\in \CC, \ \pi(a)=1\}- \#\{a\in \CC\ |\  a_\mu a\in \CC, \ \pi(a)=-1\}\]
where $a_\mu=(1,\dotsc, n)$. This  $\delta_n$ is a signed version of 
 our previous $\#\{a\in \CC\ |\ a_\mu a\in \CC\}$. 

\begin{lemma} Let $\CC$ be the class of $n$-cycles  in $S_n$ with $n$ an odd prime. The
eigenvalue of $K$ on the eigenspace in $\C\CC$ containing the sign representation is given by
\[  \lambda_{sign}=\delta_n \, (n-1)+(-1)^{{n}\choose 2}(n-1)! .\]
\end{lemma}
\proof 
The vector 
$$
v={1\over n}\sum_{\sigma\in S_n} {\epsilon(\sigma)}\sigma a_\mu \sigma^{-1}\\
= \sum_{b\in\CC} \pi(b) b,
$$
spans the sign representation, and contains $a_\mu$ with coefficient $1$. Applying $K$ gives 
$$
K\left(\sum_{b\in\CC} \pi(b) b\right)=\sum_{a,b\in\CC}\pi(b) |Z(a b)\cap\CC | a,
$$
and the desired eigenvalue is the new coefficient of $a_\mu$,
$$
\lambda_{sign}=\sum_{b\in\CC}\pi(b) |Z(a_\mu b)\cap\CC |.
$$
Since $n$ is prime, 
as in the proof of Proposition~\ref{p:lmax}, an $n$-cycle can lie in $Z(a_\mu b)$ only if
either $a_\mu b$ is itself an $n$-cycle, or if $a_\mu b=e$. 
Moreover, the cardinality $|Z(a b)\cap\CC |$ is $(n-1)$ in the first case, 
respectively $(n-1)!$ in the second case.
If follows that 
$$
\lambda_{sign}=\delta_n\, (n-1) + \pi(a_\mu^{-1})\,  (n-1)!.
$$
Clearly $\pi(a_\mu^{-1})$ is the sign of the longest permutation,
which is $(-1)^{n\choose 2}$ and the formula follows.
 \endproof

Finding the  $\delta_n$ would seem to require a refinement of 
Zagier's formula \cite{Zagier} for $\#\{a\in \CC\ |\ a_\mu a\in \CC\}$ 
into a sum of `odd and even' parts. However we conjecture for all  odd $n$ that 
\[ \delta_{n}=({n-1\over 2})!^2,\]
which we have verified for odd $n\le 9$. This conjecture if true 
would imply that for $n=2m+1$ an odd prime, 
\[ \lambda_{sign}= m!^2(2m)+(-1)^{m}(2m)!.\]

\appendix
\section{Computer verifications for simple groups}

To provide evidence for our conjectures and get a grip on the
behaviour of the Killing forms associated to minimal calculi for
finite simple groups we have performed an extensive amount of
computational verifications using the open source computer algebra
systems Sage and GAP. Code is available from the authors upon request.
In the present section we summarize our methods and results. Naming of
the conjugacy classes follows convention in the Atlas of finite simple
groups \cite{ATLAS}.

\subsection{Effective calculation of the Killing form}

To compute the Killing form $K$ associated to a conjugacy class $\CC =
g^G$ we take advantage of the ad-invariance $K({aga^{-1}}, {h}) = K(g,
{a^{-1}ha})$ by computing a section $s:\CC\to G$ satisfying $h =
s(h)gs(h)^{-1}$ for all $h\in \CC$, and using $K_{ab}=
\lambda_g^*(s(a)^{-1}bs(a))$, where $\lambda_g^*(h) := \lambda^*(gh) =
|Z(gh)\cap \CC|$, reducing computation of the Killing form to
computing its first row and the permutations that create the remaining
rows from that one.  Current limiting factor of the implementation is
computer memory, the first conjugacy class out of our reach is the
class 6B of elements of order 6 with centralizer of size 6 in the
Mathieu group $M_{12}$ of order 95040.

\subsection{Nondegeneracy}

Most of the simple groups are nondegenerate because they are Roth. The only non-Roth groups up to order 75000 are (cf. \cite{HZ}) $PSU(3,3)$ and $PSU(3,4)$. Of these, a direct computation shows that $PSU(3,3)$ is \textbf{not} nondegenerate. $PSU(3,4)$ is too large for direct inspection.

Nondegeneracy of the Killing forms for conjugacy classes is checked directly by computing its rank.  Up to
order 75,000 the Killing form is nondegenerate in all cases except
conjugacy classes 7A and 7B of elements of order 7 in the alternating
group $A_7$, conjugacy classes 4A, 4B, 8A, 8B, 12A, 12B of elements of
orders 4, 8 or 12 in the unitary group $PSU(3,3) = G_2(2)'$, and
conjugacy classes 7A and 7B of elements of order 7 in $PSL(3,4)$.  All
the degenerate cases occur in conjugacy classes that are not closed
under inversion, with \emph{real} conjugacy classes yielding
nondegenerate Killing forms.

\subsection{Irreducibility}
The irreducibility of $K$ is tested by checking connectedness of the
graph $G_K$ with vertices indexed by elements of $\CC$ and containing
an edge $(a,b)$ if and only if $K_{a,b}\neq 0$. This test does not
produce any noticeable overhead.

Generically, the tested Killing forms are irreducible, so
Perron-Frobenius theorem applies and the eigenspace associated to the
maximal eigenvalue is 1-dimensional; the only observed exceptions are
given by the conjugacy classes of involutions in the linear groups
$PSL(2, 4)=A_5$, $PSL(2, 8)$, $PSL(2, 16)$, $PSL(2,32)$, the
exceptional Suzuki group $Suz(8)$ and the unitary group $PSU(3,4)$,
all groups of matrices with coefficients on a field of characteristic
2. However, not every such group and class of involutions is
reducible, as shown by the data for $PSL(3, 4)$.

\subsection{Eigenspaces and irrep decompositions}
The computation of the characteristic polynomial and the eigenvalues
gets very slow as the size of the conjugacy classes
increase. Eigenvalues (with multiplicity) have been computed for all
the listed groups, revealing that the Killing form appears to be
positive definite whenever it comes from a conjugacy class consisting
of involutions plus the (non real) classes 3A and 3B of elements of
order 3 and centralizer of size 648 in the unitary group $PSU(4,2)$.
The link between involutions and positive definite Killing forms is
made clear for the nondegenerate case in Proposition \ref{sig}, with
the data showing that neither nondegeneracy nor being closed under
inversion can be relaxed.

For the same groups, we have also computed the decomposition into
irreps of the adjoint representation on $\C\CC$ by means of character
theory, looking for some correlation between both decompositions. As
the groups get larger, the observed behaviour is that the dimensions of
the eigenspaces coincide with the dimensions of irreducible
representations, so as the group size increases we expect that each
eigenspace contains exactly one irrep. The obvious exceptions to this
rule are the conjugacy classes yielding reducible Killing forms
mentioned in the previous paragraph.


\subsection{Data:} Here we summarize some of the obtained data for all
finite simple groups up to order 75,000. We list whether the conjugacy
class is real, reducibility of the Killing form, and its
signature. The naming of the conjugacy classes follows the convention
at the Atlas, and conjugacy classes of elements order with the same
centralizer sizes have been amalgamated whenever they show identical
behaviour. Listing the actual eigenspace decomposition of the adjoint
representation on $\C\CC$ would be too lengthy and not particularly
enlightening, so we shall omit that data here. Whenever the Killing
form is reducible we have included in the corresponding column the
number of irreducible components. Signature is expressed as $(p,n,z)$
where $p$, $n$ and $z$ are respectively the number (counted with
multiplicities) of positive, negative and zero eigenvalues; in
particular, nondegeneracy is given by zero as the last number of this
triple. In supplementary information we list the maximal eigenvalue
$\lambda_{max}$ of the Killing form, equal to the row sum. For a real
conjugacy class $(\lambda_{max}-|\CC|)/|\CC|$ is a measure of the
typical size of the other entries of the Killing form matrix after the
principal entry $|\CC|$ in each row.  We also list the value
$\chi_\CC(\CC)$ of the character of the adjoint representation on a
typical element of $\CC$ as a measure of the degree to which the
braided Lie algebra is nonabelian. It counts the number of elements in
$\CC$ that commute with any given element of $\CC$.

$A_5$, order 60\hfill\\
\begin{tabular}{|c|c|c|c|c|c|c|}
  \hline
  $\CC$ & $|\CC|$ & $\chi_\CC(\CC)$ & Real & Irred & $\lambda_{\text{max}}$ & Signature    \\\hline
  $2A$ & 15   &  $3$ &{\rm True} & False (5)& $21$  & (15, 0, 0)   \\\hline
  $3A$ & 20   &  $2$ &{\rm True} & True     & $34$  & (10, 10, 0)  \\\hline
  $5A-B$ & 12 &  $2$ &{\rm True} & True     & $24$  & (6, 6, 0)   \\\hline
\end{tabular}

$PSL(2,7)$, order 168\hfill\\
\begin{tabular}{|c|c|c|c|c|c|c|}
  \hline
  $\CC$ & $|\CC|$ & $\chi_\CC(\CC)$ & Real & Irred & $\lambda_{\text{max}}$ & Signature \\\hline
  $2A$ & 21  & $5$ &{\rm True} & {\rm True}   & $49$ &  (21, 0, 0) \\\hline
  $3A$ & 56 &  $2$ &{\rm True} & {\rm True}   & $94$ &   (28, 28, 0) \\\hline
  $4A$ & 42 &  $2$ &{\rm True} & {\rm True}   & $76$ &   (21, 21, 0) \\\hline
  $8A-B$ & 24 &$3$ &  {\rm False} & {\rm True}& $30$ & (16, 8, 0) \\\hline
\end{tabular}

$A_6$, order 360\hfill\\
\begin{tabular}{|c|c|c|c|c|c|c|}
  \hline
  $\CC$ & $|\CC|$ & $\chi_\CC(\CC)$ & Real & Irred & $\lambda_{\text{max}}$ & Signature \\
  \hline
  $2A$ & 45   & $5$ &  {\rm True}& {\rm True}	 & $73$	& (45, 0, 0)\\\hline
  $3A-B$ & 40 & $4$ &  {\rm True}& {\rm True}	 & $88$ & (20, 20, 0)	\\\hline
  $4A$ & 90   & $2$ &  {\rm True}& {\rm True}	 & $156$	& (45, 45, 0)\\\hline
  $5A-B$ & 72 & $2$ &  {\rm True}& {\rm True}	 & $134$& (36, 36, 0)	\\\hline
\end{tabular}

$PSL(2,8)$, order 504\hfill\\
\begin{tabular}{|c|c|c|c|c|c|c|}
  \hline
  $\CC$ & $|\CC|$ & $\chi_\CC(\CC)$ & Real & Irred & $\lambda_{\text{max}}$ & Signature \\
  \hline
  $2A$ & 63   & $7$ & {\rm True} & {\rm False} (9) & $105$& (63, 0, 0)\\\hline
  $3A$ & 56   & $2$ & {\rm True} & {\rm True}	 & $112$& (28, 28, 0)\\\hline
  $7A-C$ & 72 & $2$ & {\rm True} & {\rm True}	 & $130$& (36, 36, 0)	\\\hline
  $9A-C$ & 56 & $2$ & {\rm True} & {\rm True}	 & $112$& (28, 28, 0)	\\\hline
\end{tabular}

$PSL(2,11)$, order 660\hfill\\
\begin{tabular}{|c|c|c|c|c|c|c|}
  \hline
  $\CC$ & $|\CC|$ & $\chi_\CC(\CC)$ & Real & Irred & $\lambda_{\text{max}}$ & Signature \\
  \hline
  $2A$ & 55    & $7$ & {\rm True} & {\rm True} & $121$ & (55, 0, 0) \\\hline
  $3A$ & 110   & $2$ &{\rm True} & {\rm True}  & $208$ & (55, 55, 0) \\\hline
  $5A-B$ & 132 & $2$ &{\rm True} & {\rm True}  & $234$ & (66, 66, 0) \\\hline
  $6A$ & 110   & $2$ &{\rm True} & {\rm True}  & $208$ & (55, 55, 0) \\\hline
  $11A-B$ & 60 & $5$ &{\rm False} & {\rm True} & $80$ & (36, 24, 0) \\\hline
\end{tabular}

$PSL(2,13)$, order 1092\hfill\\
\begin{tabular}{|c|c|c|c|c|c|c|}
  \hline
  $\CC$ & $|\CC|$ & $\chi_\CC(\CC)$ & Real & Irred & $\lambda_{\text{max}}$ & Signature \\
  \hline
  $2A$ & 91    & $7$ &  {\rm True} & {\rm True}  & $157$ & (91, 0, 0) \\\hline
  $3A$ & 182   & $2$ &  {\rm True}& {\rm True}   & $328$ & (91, 91, 0) \\\hline
  $6A$ & 182   & $2$ &  {\rm True}& {\rm True}   & $328$ & (91, 91, 0) \\\hline
  $7A-C$ & 156 & $2$ &  {\rm True}& {\rm True}   & $298$ & (78, 78, 0) \\\hline
  $13A-B$ & 84 & $6$ &  {\rm True}& {\rm True}   & $192$ & (42, 42, 0) \\\hline
\end{tabular}

$PSL(2,17)$, order 2448\hfill\\
\begin{tabular}{|c|c|c|c|c|c|c|}
  \hline
  $\CC$ & $|\CC|$  & $\chi_\CC(\CC)$ & Real & Irred & $\lambda_{\text{max}}$ & Signature\\
  \hline
  $2A$ & 153   & $9$ &  {\rm True} & {\rm True}  & $273$ & (153, 0, 0)  \\\hline
  $3A$ & 272   & $2$ &  {\rm True} & {\rm True}  & $526$ & (136, 136, 0)  \\\hline
  $4A$ & 306   & $2$ &  {\rm True} & {\rm True}  & $564$ & (153, 153, 0)  \\\hline
  $8A-B$ & 306 & $2$ &  {\rm True} & {\rm True}  & $564$ & (153, 153, 0)  \\\hline
  $9A-C$ & 272 & $2$ &  {\rm True} & {\rm True}  & $526$ & (136, 136, 0)  \\\hline
  $17A-B$ & 144&  $8$ &{\rm True} & {\rm True}    & $336$ & (72, 72, 0)  \\\hline
\end{tabular}

$A_7$, order 2520\hfill\\
\begin{tabular}{|c|c|c|c|c|c|c|}
  \hline
  $\CC$ & $|\CC|$ & $\chi_\CC(\CC)$ & Real & Irred & $\lambda_{\text{max}}$ & Signature \\
  \hline
  $2A$ & 105 &  $9$ & {\rm True} & {\rm True}   & $273$& (105, 0, 0)\\\hline
  $3A$ & 70  & $10$ & {\rm True} &  {\rm True}  & $256$& (35, 35, 0)\\\hline
  $3B$ & 280 &  $4$ & {\rm True} &  {\rm True}  & $616$& (140, 140, 0)\\\hline
  $4A$ & 630 &  $2$ & {\rm True} &  {\rm True}  & $1068$& (315, 315, 0)\\\hline
  $5A$ & 504 &  $4$ & {\rm True} &  {\rm True}  & $936$& (252, 252, 0)\\\hline
  $6A$ & 210 &  $6$ & {\rm True} &  {\rm True}  & $528$& (105, 105, 0)\\\hline
  $7A-B$ & 360& $3$ &  {\rm False} & {\rm True} & $324$& (171, 140, 49)  \\\hline
\end{tabular}

$PSL(2,19)$, order 3420\hfill\\
\begin{tabular}{|c|c|c|c|c|c|c|}
  \hline
  $\CC$ & $|\CC|$ & $\chi_\CC(\CC)$ & Real & Irred & $\lambda_{\text{max}}$ & Signature \\
  \hline
  $2A$  & 171   &  $11$ &{\rm True}  & {\rm True} & $361$  & (171, 0, 0)\\\hline
  $3A$  & 380   &  $2$ & {\rm True}  & {\rm True}& $706$  & (190, 190, 0)\\\hline
  $5A-B$  & 342 &  $2$ & {\rm True}  & {\rm True}& $664$& (171, 171, 0)\\\hline
  $9A-C$  & 380 &  $2$ & {\rm True}  & {\rm True}& $664$& (190, 190, 0)\\\hline
  $10A-B$ & 342 &  $2$ & {\rm True}  & {\rm True}& $706$& (171, 171, 0)\\\hline
  $19A-B$ & 180 &  $9$ & {\rm False} & {\rm True}& $252$& (100, 80, 0)\\\hline
\end{tabular}

$PSL(2,16)$, order 4080\hfill\\
\begin{tabular}{|c|c|c|c|c|c|c|}
  \hline
  $\CC$ & $|\CC|$ & $\chi_\CC(\CC)$ & Real & Irred & $\lambda_{\text{max}}$ & Signature  \\
  \hline
  $2A$  & 255   &$15$ & {\rm True} & {\rm False} (17) & $465$ & (255, 0, 0) \\\hline
  $3A$  & 272   &$2$ &  {\rm True} & {\rm True}      & $514$ & (136, 136, 0) \\\hline
  $5A-B$  & 272 &$2$ &  {\rm True} & {\rm True}      & $514$ & (136, 136, 0) \\\hline
  $15A-D$ & 272 & $2$ & {\rm True} & {\rm True}       & $514$ & (136, 136, 0) \\\hline
  $17A-H$ & 240 & $2$ & {\rm True} & {\rm True}       & $480$ & (120, 120, 0) \\\hline
\end{tabular}

$PSL(3,3)$, order 5616\hfill\\
\begin{tabular}{|c|c|c|c|c|c|c|}
  \hline
  $\CC$ & $|\CC|$ & $\chi_\CC(\CC)$ & Real & Irred & $\lambda_{\text{max}}$ & Signature \\
  \hline
  $2A$  & 117   & $13$ & {\rm True}  & {\rm True}   & $489$ & (117, 0, 0)\\\hline
  $3A$  & 104   & $14$ & {\rm True}  & {\rm True}   & $412$ & (52, 52, 0)\\\hline
  $3B$  & 624   & $6$ &  {\rm True}  & {\rm True}  & $1224$ & (312, 312, 0)\\\hline
  $4A$  & 702   & $2$ &  {\rm True}  & {\rm True}  & $1356$ & (351, 351, 0)\\\hline
  $6A$  & 936   & $2$ &  {\rm True}  & {\rm True}  & $1848$ & (468, 468, 0)\\\hline
  $8A-B$  & 702 & $2$ &  {\rm False} & {\rm True}  & $600$ & (337, 365, 0)\\\hline
  $13A-B$ & 432 & $3$ &  {\rm False} & {\rm True}  & $399$ & (224, 208, 0)\\\hline
  $13C-D$ & 432 & $3$ &  {\rm False} & {\rm True}  & $399$ & (236, 196, 0)\\\hline
\end{tabular}

$PSU(3,3)\cong {G_2^2}' $, order 6048\hfill\\
\begin{tabular}{|c|c|c|c|c|c|c|}
  \hline
  $\CC$ & $|\CC|$ & $\chi_\CC(\CC)$ & Real & Irred & $\lambda_{\text{max}}$ & Signature \\
  \hline
  $2A$  & 63    & $7$ &  {\rm True}  & {\rm True}   & $177$ & (63, 0, 0)\\\hline
  $3A$  & 56    & $2$ &  {\rm True}  & {\rm True}   & $112$ & (28, 28, 0)\\\hline
  $3B$  & 672   & $6$ &  {\rm True}  & {\rm True}   & $1332$& (336, 336, 0)\\\hline
  $4A-B$  & 63  & $7$ &  {\rm False} & {\rm True}   & $105$ & (22, 14, 27) \\\hline
  $4C$  & 378   & $6$ &  {\rm True}  & {\rm True}   & $852$ & (189, 189, 0) \\\hline
  $6A$  & 504   & $2$ &  {\rm True}  & {\rm True}   & $1104$& (252, 252, 0) \\\hline
  $7A-B$  & 864 & $3$ &  {\rm False} & {\rm True}   & $555$ & (436, 428, 0) \\\hline
  $8A-B$  & 756 & $2$ &  {\rm False} & {\rm True}   & $752$ & (364, 365, 27) \\\hline
  $12A-B$ & 504 & $2$ &  {\rm False} & {\rm True}   & $480$ & (238, 224, 42) \\\hline
\end{tabular}

$PSL(2,23)$, order 6072\hfill\\
\begin{tabular}{|c|c|c|c|c|c|c|}
  \hline
  $\CC$ & $|\CC|$ & $\chi_\CC(\CC)$ & Real & Irred & $\lambda_{\text{max}}$ & Signature \\
  \hline
  $2A$  & 253    &$13$ &  {\rm True}  & {\rm True}  & $529$ & (253, 0, 0)\\\hline
  $3A$  & 506    &$2$  &  {\rm True}  & {\rm True}  & $988$ & (253, 253, 0)\\\hline
  $4A$  & 506    &$2$  &  {\rm True}  & {\rm True}  & $988$ & (253, 253, 0)\\\hline
  $6A$  & 506    &$2$  &  {\rm True}  & {\rm True}  & $988$ & (253, 253, 0)\\\hline
  $11A-E$ & 552  & $2$ &  {\rm True}  & {\rm True}& $1038$ & (276, 276, 0)\\\hline
  $12A-B$ & 506  & $2$ &  {\rm True}  & {\rm True}& $988$ & (253, 253, 0)\\\hline
  $23A-B$ & 264  & $11$&  {\rm False} & {\rm True}& $374$ & (144, 120, 0)\\\hline
\end{tabular}

$PSL(2,25)$, order 7800\hfill\\
\begin{tabular}{|c|c|c|c|c|c|c|}
  \hline
  $\CC$ & $|\CC|$ & $\chi_\CC(\CC)$ & Real & Irred & $\lambda_{\text{max}}$ & Signature \\
  \hline
  $2A$  & 325   &$13$ & {\rm True} & {\rm True}& $601$  & (325, 0, 0) \\\hline
  $3A$  & 650   &$2$  & {\rm True} & {\rm True}& $1228$  & (325, 325, 0) \\\hline
  $4A$  & 650   &$2$  & {\rm True} & {\rm True}& $1228$  & (325, 325, 0) \\\hline
  $5A-B$  & 312 &$12$ & {\rm True} & {\rm True}& $744$  & (156, 156, 0) \\\hline
  $6A$  & 650   &$2$  & {\rm True} & {\rm True}& $1228$  & (325, 325, 0) \\\hline
  $12A-B$ & 650 & $2$ & {\rm True} & {\rm True}& $1228$ & (325, 325, 0) \\\hline
  $13A-F$ & 600 & $2$ & {\rm True} & {\rm True}& $1174$ & (300, 300, 0) \\\hline
\end{tabular}

$M_{11}$, order 7920\hfill\\
\begin{tabular}{|c|c|c|c|c|c|c|}
  \hline
  $\CC$ & $|\CC|$ & $\chi_\CC(\CC)$ & Real & Irred & $\lambda_{\text{max}}$ & Signature \\
  \hline
  $2A$  & 165    &$13$ &  {\rm True}& {\rm True}  & $489$& (165, 0, 0)\\\hline 
  $3A$  & 440    &$8$  &  {\rm True}& {\rm True}  & $946$& (220, 220, 0)\\\hline
  $4A$  & 990    &$2$  &  {\rm True}& {\rm True}  & $2108$& (495, 495, 0)\\\hline
  $5A$  & 1584   & $4$ &  {\rm True}& {\rm True}  & $3096$& (792, 792, 0)\\\hline
  $6A$  & 1320   & $2$ &  {\rm True}& {\rm True}  & $2568$& (660, 660, 0)\\\hline
  $8A-B$  & 990  &$2$  &  {\rm False}& {\rm True}& $920$& (515, 475, 0)\\\hline
  $11A-B$ & 720  & $5$ &  {\rm False}& {\rm True}& $575$& (355, 365, 0)\\\hline
\end{tabular}

$PSL(2,27)$, order 9828\hfill\\
\begin{tabular}{|c|c|c|c|c|c|c|}
  \hline
  $\CC$ & $|\CC|$ & $\chi_\CC(\CC)$ & Real & Irred & $\lambda_{\text{max}}$ & Signature \\
  \hline
  $2A$  & 351   & $15$ &{\rm True}  & {\rm True}  & $729$& (351, 0, 0)\\\hline
  $3A-B$  & 364 & $13$ &{\rm False} & {\rm True}& $520$& (196, 168, 0)\\\hline
  $7A-C$  & 702 & $2$  &{\rm True}  & {\rm True}& $1376$& (351, 351, 0)\\\hline
  $13A-F$ & 756 &  $2$ &{\rm True}  & {\rm True}& $1434$ & (378, 378, 0)\\\hline
  $14A-C$ & 702 &  $2$ &{\rm True}  & {\rm True}& $1376$ & (351, 351, 0)\\\hline
\end{tabular}

$PSL(2,29)$, order 12180\hfill\\
\begin{tabular}{|c|c|c|c|c|c|c|}
  \hline
  $\CC$ & $|\CC|$ & $\chi_\CC(\CC)$ & Real & Irred & $\lambda_{\text{max}}$ & Signature \\
  \hline
  $2A$  & 435   &  $15$ & {\rm True} & {\rm True}  & $813$& (435, 0, 0)\\\hline
  $3A$  & 812   &  $2$  & {\rm True} & {\rm True}  & $1594$& (406, 406, 0)\\\hline
  $5A-B$  & 812 &  $2$  & {\rm True} & {\rm True}& $1594$& (406, 406, 0)\\\hline
  $7A-C$  & 870 &  $2$  & {\rm True} & {\rm True}& $1656$& (435, 435, 0)\\\hline
  $14A-C$ & 870 &  $2$ & {\rm True} & {\rm True}& $1656$ & (435, 435, 0)\\\hline
  $15A-D$ & 812 &  $2$ & {\rm True} & {\rm True}& $1594$ & (406, 406, 0)\\\hline
  $29A-B$ & 420 &  $14$ &{\rm True} & {\rm True} & $1008$ & (210, 210, 0)\\\hline
\end{tabular}

$PSL(2,31)$, order 14880\hfill\\
\begin{tabular}{|c|c|c|c|c|c|c|}
  \hline
  $\CC$ & $|\CC|$ & $\chi_\CC(\CC)$ & Real & Irred & $\lambda_{\text{max}}$ & Signature \\
  \hline
  $2A$  & 465   &$17$  &  {\rm True} & {\rm True}  & $961$& (465, 0, 0)\\\hline
  $3A$  & 992   &$2$   &  {\rm True} & {\rm True}  & $1894$& (496, 496, 0)\\\hline
  $4A$  & 930   &$2$   &  {\rm True} & {\rm True}  & $1828$& (465, 465, 0)\\\hline
  $5A-B$  & 992 &$2$   &  {\rm True} & {\rm True}& $1894$& (496, 496, 0)\\\hline
  $8A$  & 930   &$2$   &  {\rm True} & {\rm True}  & $1828$& (465, 465, 0)\\\hline
  $15A-D$ & 992 & $2$  &  {\rm True} & {\rm True}& $1894$ & (496, 496, 0)\\\hline
  $16A-E$ & 930 & $2$  &  {\rm True} & {\rm True}& $1828$ & (465, 465, 0)\\\hline
  $31A-B$ & 480 & $15$ &  {\rm False}& {\rm True}& $690$ & (256, 224, 0)\\\hline
\end{tabular}

$A_8$, order 20160\hfill\\
\begin{tabular}{|c|c|c|c|c|c|c|}
  \hline
  $\CC$ & $|\CC|$ & $\chi_\CC(\CC)$ & Real & Irred & $\lambda_{\text{max}}$ & Signature \\
  \hline
  $2A$  & 105  &  $25$ & {\rm True}  &  {\rm True}  & $849$& (105, 0, 0) \\\hline
  $2B$  & 210  &  $18$ & {\rm True}  &  {\rm True}  & $996$& (210, 0, 0) \\\hline
  $3A$  & 112  &  $22$ & {\rm True}  &  {\rm True}  & $784$& (56, 56, 0) \\\hline
  $3B$  & 1120 &   $4$ & {\rm True}  &  {\rm True}  & $3028$& (560, 560, 0) \\\hline
  $4A$  & 1260 &   $8$ & {\rm True}  &  {\rm True}  & $3280$& (630, 630, 0) \\\hline
  $4B$  & 2520 &   $4$ & {\rm True}  &  {\rm True}  & $4736$& (1260, 1260, 0) \\\hline
  $5A$  & 1344 &   $4$ & {\rm True}  &  {\rm True}  & $2996$& (672, 672, 0) \\\hline
  $6A$  & 1680 &   $6$ & {\rm True}  &  {\rm True}  & $3600$& (840, 840, 0) \\\hline
  $6B$  & 3360 &   $2$ & {\rm True}  &  {\rm True}  & $6168$& (1680, 1680, 0) \\\hline
  $7A-B$  & 2880 & $3$ & {\rm False} &  {\rm True}& $2466$& (1375, 1505, 0)  \\\hline
  $15A-B$ & 1344 & $4$ & {\rm False} &  {\rm True} & $1556$ & (597, 747, 0)  \\\hline
\end{tabular}

$PSL(3,4)$, order 20160\hfill\\
\begin{tabular}{|c|c|c|c|c|c|c|}
  \hline
  $\CC$ & $|\CC|$ & $\chi_\CC(\CC)$ & Real & Irred & $\lambda_{\text{max}}$ & Signature \\
  \hline
  $2A$ & 315    & $27$  & {\rm True}  & {\rm True}  & $1305$ & (315, 0, 0)\\ \hline
  $3A$ & 2240   &  $8$  & {\rm True}  & {\rm True}  & $4888$ & (1120, 1120, 0)\\ \hline
  $4A-C$ & 1260 &  $12$ & {\rm True} & {\rm True}  & $3312$ & (630, 630, 0)\\\hline
  $5A-B$ & 4032 &  $2$  & {\rm True} & {\rm True}  & $7284$ & (2016, 2016, 0)\\\hline
  $7A-B$ & 2880 &  $3$  & {\rm False} & {\rm True}  & $2466$ & (1398, 1302, 180) \\ \hline
\end{tabular}

$PSL(2,37)$, order 25308\hfill\\
\begin{tabular}{|c|c|c|c|c|c|c|}
  \hline
  $\CC$ & $|\CC|$ & $\chi_\CC(\CC)$ & Real & Irred & $\lambda_{\text{max}}$ & Signature \\
  \hline
  $2A$  & 703  &  $19$ &  {\rm True} & {\rm True} & $1333$ & (703, 0, 0)\\\hline
  $3A$  & 1406 &   $2$ &  {\rm True} & {\rm True} & $2704$ & (703, 703, 0)\\\hline
  $6A$  & 1406 &   $2$ &  {\rm True} & {\rm True} & $2704$ & (703, 703, 0)\\\hline
  $9A-C$  & 1406 & $2$ &  {\rm True} & {\rm True} & $2704$ & (703, 703, 0)\\\hline
  $18A-C$ & 1406 & $2$ &  {\rm True} & {\rm True} & $2704$ & (703, 703, 0)\\\hline
  $19A-I$ & 1332 & $2$ &  {\rm True} & {\rm True} & $2626$ & (666, 666, 0)\\\hline
  $37A-B$ & 684 & $18$ &  {\rm True} & {\rm True} & $1656$ & (342, 342, 0)\\\hline
\end{tabular}

$PSU(4,2)$, order 25920\hfill\\
\begin{tabular}{|c|c|c|c|c|c|c|}
  \hline
  $\CC$ & $|\CC|$ & $\chi_\CC(\CC)$ & Real & Irred & $\lambda_{\text{max}}$ & Signature \\
  \hline
  $2A$  & 45     &$13$ & {\rm True}   & {\rm True}  & $201$ & (45, 0, 0) \\\hline
  $2B$  & 270    &$22$ & {\rm True}   & {\rm True}  & $1188$ & (270, 0, 0) \\\hline
  $3A-B$  & 40   &$13$ &  {\rm False} & {\rm True} & $196$ & (40, 0, 0) \\\hline
  $3C$  & 240    &$6$  & {\rm True}   & {\rm True}  & $720$ & (120, 120, 0) \\\hline
  $3D$  & 480    &$12$ & {\rm True}   & {\rm True}  & $1548$ & (240, 240, 0) \\\hline
  $4A$  & 540    &$8$  & {\rm True}   & {\rm True}  & $1488$ & (270, 270, 0) \\\hline
  $4B$  & 3240   &$4$ &  {\rm True}   & {\rm True}  & $5440$ & (1620, 1620, 0) \\\hline
  $5A$  & 5184   &$4$ &  {\rm True}   & {\rm True}  & $9836$ & (2592, 2592, 0) \\\hline
  $6A-B$  & 360  &$5$  & {\rm False} & {\rm True} & $708$ & (231, 129, 0) \\\hline
  $6C-D$  & 720  &$4$  & {\rm False} & {\rm True} & $1272$ & (364, 356, 0) \\\hline
  $6E$  & 1440   &$2$ &  {\rm True}   & {\rm True}  & $3336$ & (720, 720, 0) \\\hline
  $6F$  & 2160   &$2$ &  {\rm True}   & {\rm True}  & $4176$ & (1080, 1080, 0) \\\hline
  $9A-B$  & 2880 &$3$ &  {\rm False} & {\rm True} & $2646$ & (1595, 1285, 0) \\\hline
  $12A-B$ & 2160 & $2$&  {\rm False} & {\rm True} & $1824$ & (1035, 1125, 0) \\\hline
\end{tabular}

$Suz_8$, order 29120\hfill\\
\begin{tabular}{|c|c|c|c|c|c|c|}
  \hline
  $\CC$ & $|\CC|$ & $\chi_\CC(\CC)$ & Real & Irred & $\lambda_{\text{max}}$ & Signature \\
  \hline
  $2A$  & 455    & $7$ & {\rm True}    &{\rm False} (65)& $497$& (455, 0, 0)\\\hline
  $4A-B$  & 1820 & $4$ & {\rm False} &{\rm True}      & $2768$ & (755, 1065, 0)\\\hline
  $5A$  & 5824   & $4$ & {\rm True}    &{\rm True}      & $9796$ & (2912, 2912, 0)\\\hline
  $7A-C$  & 4160 & $2$ & {\rm True}  &{\rm True}      & $7690$ & (2080, 2080, 0)\\\hline
  $13A-C$ & 2240 & $4$ & {\rm True}  &{\rm True}       & $4748$ & (1120, 1120, 0)\\\hline
\end{tabular}

$PSL(2,32)$, order 32736\hfill\\
\begin{tabular}{|c|c|c|c|c|c|c|}
  \hline
  $\CC$ & $|\CC|$ & $\chi_\CC(\CC)$ & Real & Irred & $\lambda_{\text{max}}$ & Signature \\
  \hline
  $2A$  & 1023   &$31$ &  {\rm True} & {\rm False} (33)& $1953$ & (1023, 0, 0)\\\hline
  $3A$  & 992    &$2$  &  {\rm True} & {\rm True}      & $1984$ & (496, 496, 0)\\\hline
  $11A-E$ & 992  &$2$  &  {\rm True} & {\rm True}    & $1984$ & (496, 496, 0)\\\hline
  $31A-O$ & 1056 &$2$  &  {\rm True} & {\rm True}    & $2050$ & (528, 528, 0)\\\hline
  $33A-J$ & 992  &$2$  &  {\rm True} & {\rm True}    & $1984$ & (496, 496, 0)\\\hline
\end{tabular}

$PSL(2,41)$, order 34440\hfill\\
\begin{tabular}{|c|c|c|c|c|c|c|}
  \hline
  $\CC$ & $|\CC|$ & $\chi_\CC(\CC)$ & Real & Irred & $\lambda_{\text{max}}$ & Signature \\
  \hline
  $2A$  & 861    & $21$ & {\rm True} &  {\rm True}   & $1641$& (861, 0, 0)\\\hline
  $3A$  & 1640   & $2$  &{\rm True} &  {\rm True}   & $3238$& (820, 820, 0)\\\hline
  $4A$  & 1722   & $2$  &{\rm True} &  {\rm True}   & $3324$& (861, 861, 0)\\\hline
  $5A-B$  & 1722 & $2$  &{\rm True}  & {\rm True} & $3324$& (861, 861, 0)\\\hline
  $7A-C$  & 1640 & $2$  &{\rm True}  & {\rm True} & $3238$& (820, 820, 0)\\\hline
  $10A-B$ & 1722 & $2$  & {\rm True}  & {\rm True} & $3324$& (861, 861, 0)\\\hline
  $20A-D$ & 1722 & $2$  & {\rm True}  & {\rm True} & $3324$& (861, 861, 0)\\\hline
  $21A-F$ & 1640 & $2$  & {\rm True}  & {\rm True} & $3238$& (820, 820, 0)\\\hline
  $41A-B$ & 840  & $20$ &{\rm True}  & {\rm True} & $2040$& (420, 420, 0)\\\hline
\end{tabular}

$PSL(2,43)$, order 39732\hfill\\
\begin{tabular}{|c|c|c|c|c|c|c|}
  \hline
  $\CC$ & $|\CC|$ & $\chi_\CC(\CC)$ & Real & Irred & $\lambda_{\text{max}}$ & Signature \\
  \hline
  $2A$  & 903    &$23$  & {\rm True} & {\rm True}    & $1849$& (903, 0, 0)\\\hline
  $3A$  & 1892   & $2$  & {\rm True} & {\rm True}    & $3658$& (946, 946, 0)\\\hline
  $7A-C$  & 1892 & $2$  & {\rm True} & {\rm True}  & $3658$& (946, 946, 0)\\\hline
  $11A-E$ & 1806 & $2$ &  {\rm True} & {\rm True}  & $3568$& (903, 903, 0)\\\hline
  $21A-F$ & 1892 & $2$ &  {\rm True} & {\rm True}  & $3658$& (946, 946, 0)\\\hline
  $22A-E$ & 1806 & $2$ &  {\rm True} & {\rm True}  & $3568$& (903, 903, 0)\\\hline
  $43A-B$ & 924  & $21$ & {\rm False}& {\rm True}  & $1344$& (484, 440, 0)\\\hline
\end{tabular}

$PSL(2,47)$, order 51888\hfill\\
\begin{tabular}{|c|c|c|c|c|c|c|}
  \hline
  $\CC$ & $|\CC|$ & $\chi_\CC(\CC)$ & Real & Irred & $\lambda_{\text{max}}$ & Signature \\
  \hline
  $2A$  & 1081   &$25$ &  {\rm True} & {\rm True}   & $2209$& (1081, 0, 0)\\\hline
  $3A$  & 2162   &$2$  &  {\rm True} & {\rm True}   & $4276$& (1081, 1081, 0)\\\hline
  $4A$  & 2162   &$2$  &  {\rm True} & {\rm True}   & $4276$& (1081, 1081, 0)\\\hline
  $6A$  & 2162   &$2$  &  {\rm True} & {\rm True}   & $4276$& (1081, 1081, 0)\\\hline
  $8A-B$  & 2162 &$2$  &  {\rm True} & {\rm True} & $4276$& (1081, 1081, 0)\\\hline
  $12A-B$ & 2162 & $2$ &  {\rm True} & {\rm True} & $4276$& (1081, 1081, 0)\\\hline
  $23A-K$ & 2256 & $2$ &  {\rm True} & {\rm True} & $4374$& (1128, 1128, 0)\\\hline
  $24A-D$ & 2162 & $2$ &  {\rm True} & {\rm True} & $4276$& (1081, 1081, 0)\\\hline
  $47A-B$ & 1104 & $23$ & {\rm False}& {\rm True}  & $1610$& (576, 528, 0)\\\hline
\end{tabular}

$PSL(2,49)$, order 58800\hfill\\
\begin{tabular}{|c|c|c|c|c|c|c|}
  \hline
  $\CC$ & $|\CC|$ & $\chi_\CC(\CC)$ & Real & Irred & $\lambda_{\text{max}}$ & Signature  \\
  \hline
  $2A$  & 1225   & $25$ & {\rm True} & {\rm True}  & $2353$& (1225, 0, 0)\\\hline
  $3A$  & 2450   & $2$  & {\rm True} & {\rm True}  & $4756$& (1225, 1225, 0)\\\hline
  $4A$  & 2450   & $2$  & {\rm True} & {\rm True}  & $4756$& (1225, 1225, 0)\\\hline
  $5A-B$  & 2352 & $2$  & {\rm True} & {\rm True}& $4654$& (1176, 1176, 0)\\\hline
  $6A$  & 2450   & $2$  & {\rm True} & {\rm True}  & $4756$& (1225, 1225, 0)\\\hline
  $7A-B$  & 1200 & $24$ & {\rm True} & {\rm True}& $2928$& (600, 600, 0)\\\hline
  $8A-B$  & 2450 & $2$  & {\rm True} & {\rm True}& $4756$& (1225, 1225, 0)\\\hline
  $12A-B$ & 2450 &  $2$ & {\rm True} & {\rm True}& $4756$ & (1225, 1225, 0)\\\hline
  $24A-D$ & 2450 &  $2$ & {\rm True} & {\rm True}& $4756$ & (1225, 1225, 0)\\\hline
  $25A-J$ & 2352 &  $2$ & {\rm True} & {\rm True}& $4654$ & (1176, 1176, 0)\\\hline
\end{tabular}

$PSU(3,4)$, order 62400\hfill\\
\begin{tabular}{|c|c|c|c|c|c|c|}
  \hline
  $\CC$ & $|\CC|$ & $\chi_\CC(\CC)$ & Real & Irred & $\lambda_{\text{max}}$ & Signature \\
  \hline
  $2A$ & $195$     & $3$  & {\rm True} & {\rm False} (65)& $201$& (195, 0, 0)\\\hline
  $3A$ & $4160$    & $2$  & {\rm True} & {\rm True}     & $8134$& (2080, 2080, 0)\\\hline
  $4A$ & $3900$    & $12$ & {\rm True} & {\rm True}     & $7824$& (1950, 1950, 0)\\\hline
  $5A-D$ & $208$   & $13$ & {\rm False} & {\rm True}   & $484$& (79, 129, 0)\\\hline
  $5E-F$ & $2496$  & $6$  & {\rm True} & {\rm True}   & $5436$& (1248, 1248, 0)\\\hline
  $10A-D$ & $3120$ & $3$  & {\rm False} & {\rm True} & $3756$& (1586, 1534, 0)\\\hline
  $13A-D$ & $4800$ & $3$  & {\rm False} & {\rm True} & $3948$& (2310, 2490, 0)\\\hline
  $15A-D$ & $4160$ & $2$  & {\rm False} & {\rm True} & $4054$& (2041, 2119, 0)\\\hline
\end{tabular}

$PSL(2,53)$, order 74412\hfill\\
\begin{tabular}{|c|c|c|c|c|c|c|}
  \hline
  $\CC$ & $|\CC|$ & $\chi_\CC(\CC)$ & Real & Irred & $\lambda_{\text{max}}$ & Signature \\
  \hline
  $2A$ & $1431$     & $27$ & {\rm True} & {\rm True}    & $2757$& (1431, 0, 0)\\\hline
  $3A$ & $2756$     & $2$  & {\rm True} & {\rm True}    & $5458$& (1378, 1378, 0)\\\hline
  $9A-C$ & $2756$  & $2$  & {\rm True} & {\rm True} & $5458$& (1378, 1378, 0)\\\hline
  $13A-F$ & $2862$ & $2$  & {\rm True}& {\rm True} & $5568$& (1431, 1431, 0)\\\hline
  $26A-F$ & $2862$ & $2$  & {\rm True}& {\rm True} & $5568$& (1431, 1431, 0)\\\hline
  $27A-I$ & $2756$ & $2$  & {\rm True}& {\rm True} & $5458$& (1378, 1378, 0)\\\hline
  $53A-B$ & $1404$ & $26$ & {\rm True}& {\rm True} & $3432$& (702, 702, 0)\\\hline
\end{tabular}

\end{document}